\newtheorem{theorem}{Theorem}[section]
\newtheorem{corollary}[theorem]{Corollary}
\newtheorem{lemma}[theorem]{Lemma}
\newtheorem{definition}[theorem]{Definition}
\newtheorem{proposition}[theorem]{Proposition}
\newtheorem{sublemma}{Lemma}[theorem]
\newcommand{\into}{\triangleright}
\newcommand{\within}{\triangleleft}
\title{Intermediate Intrinsic Density and Randomness}
\author{Justin Miller}
\thanks{The author would like to thank his advisor, Dr. Peter Cholak, for the advice, discussion, and support that made this project possible. The author would also like to thank Dr.\ Laurent Bienvenu, Dr.\ Denis Hirschfeldt, and an anonymous referee for helpful advice and resources provided in critique of earlier versions of this paper.}
\thanks{Partially supported by NSF-DMS-1854136}
\date{}
\begin{document}

\maketitle

\begin{abstract}
    Given any 1-random set $X$ and any $r\in(0,1)$, we construct a set of intrinsic density $r$ which is computable from $r\oplus X$. For almost all $r$, this set will be the first known example of an intrinsic density $r$ set which cannot compute any $r$-Bernoulli random set. To achieve this, we shall formalize the {\tt into} and {\tt within} noncomputable coding methods which work well with intrinsic density.
\end{abstract}

{\let\thefootnote\relax\footnote{{\textbf{MSC:} 03D32}}}
{\let\thefootnote\relax\footnote{\textbf{Keywords:} intrinsic density, stochasticity, randomness, computability}}

\section{Introduction}
We shall study the sets whose intrinsic density is in the open unit interval. By a result of Astor \cite{intrinsicdensity}, intrinsic density $r$ is equivalent to injection stochasticity $r$, which is the uniform variant of Kolmogorov-Loveland stochasticity. Our goal is to separate intrinsic density from randomness: for almost all $r$ in the open unit interval, we shall construct sets which have intrinsic density $r$, or injection stochasticity $r$, which cannot compute any set random with respect to the $r$-Bernoulli measure.
\\
\\
We briefly recall the notion of (asymptotic) density in the natural numbers:

\begin{definition}
Let $A\subseteq\omega$.
\begin{itemize}
    \item The density of $A$ at $n$ is $\rho_n(A)=\frac{|A\upharpoonright n|}{n}$, where $A\upharpoonright n=A\cap\{0,1,2,\dots,n-1\}$.  
    \item The upper density of $A$ is $\overline{\rho}(A)=\limsup_{n\to\infty} \rho_n(A)$. 
    \item The lower density of $A$ is $\underline{\rho}(A)=\liminf_{n\to\infty} \rho_n(A)$. 
    \item If $\overline{\rho}(A)=\underline{\rho}(A)=\alpha$, we call $\alpha$ the density of $A$ and denote it by $\rho(A)$.
\end{itemize}
\end{definition}

Astor \cite{intrinsicdensity} introduced intrinsic density, which requires that the asymptotic density remain fixed under any computable permutation.

\begin{definition}
\begin{itemize}
    \item The absolute upper density of $A$ is 
    \[\overline{P}(A)=\sup\{\overline{\rho}(\pi(A)):\pi\text{ a computable permutation}\}\]
    \item The absolute lower density of $A$ is \[\underline{P}(A)=\inf\{\underline{\rho}(\pi(A)):\pi\text{ a computable permutation}\}\]
    \item If $\overline{P}(A)=\underline{P}(A)=\alpha$, we call $\alpha$ the intrinsic density of $A$ and denote it by $P(A)$. In particular, $P(A)=\alpha$ if and only if $\rho(\pi(A))=\alpha$ for every computable permutation $\pi$.
\end{itemize}
\end{definition}

A natural question to ask is what reals in the unit interval can be achieved as the intrinsic density of some set. Sets of intrinsic density $0$ and $1$ are well-known. In the open unit interval, Bienvenu (personal communication) gave a straightforward probabilistic argument that shows the sets of intrinsic density $r$ have measure $1$ under the $r$-Bernoulli measure. In fact, something stronger is true.

\begin{definition}
\label{randomnessmeasure}
Let $0<r<1$ be a real number. The Bernoulli measure with parameter $r$, $\mu_r$, is the measure on Cantor space such that for any $\sigma\in2^{<\omega}$, 
\[\mu_r(\sigma)=r^{|\{n<|\sigma|:\sigma(n)=1\}|}(1-r)^{|\{n<|\sigma|:\sigma(n)=0\}|}\]
If $X$ is ML-random with respect to $\mu_r$, we say it is $r$-random.
\end{definition}

If we say 1-random, we are specifically referring to $\frac{1}{2}$-ML-randomness. For a review of randomness with respect to noncomputable measures, see Reimann and Slaman \cite{ReimannSlaman}.

\begin{proposition}
\label{randomdensity}
Let $r\in(0,1)$. If $X$ is $r$-random, then $X$ has intrinsic density $r$.
\end{proposition}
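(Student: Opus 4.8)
The plan is to show that for every computable permutation $\pi$ the set $\pi(X)$ has ordinary asymptotic density $r$; by the last clause of the definition of intrinsic density ($P(A)=\alpha$ iff $\rho(\pi(A))=\alpha$ for every computable permutation $\pi$) this is exactly what is required. The probabilistic engine is an effective form of the law of large numbers: for $n$ distinct coordinates, the number of them on which a $\mu_r$-distributed sequence takes the value $1$ is binomially distributed with parameters $n$ and $r$, so a Hoeffding/Chernoff estimate gives $\mu_r\bigl(\{Y:|\rho_n(\pi(Y))-r|>\epsilon\}\bigr)\le 2e^{-2\epsilon^2 n}$. Here we use that $\pi(Y)\upharpoonright n=\{m<n:\pi^{-1}(m)\in Y\}$, so $\rho_n(\pi(Y))$ depends only on the values of $Y$ at the $n$ (distinct) positions $\pi^{-1}(0),\dots,\pi^{-1}(n-1)$.

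Fix a computable permutation $\pi$ and a rational $\epsilon>0$. First I would check that for each $n$ the set $\{Y:|\rho_n(\pi(Y))-r|>\epsilon\}$ is a finite union of cylinders: it is determined by the bits of $Y$ in the $\pi$-computable finite set $\{\pi^{-1}(0),\dots,\pi^{-1}(n-1)\}$ together with the $\Sigma^0_1(r)$ comparison of a rational with $r$, and hence it is $\Sigma^0_1(r)$ uniformly in $n$. Since $\sum_{n\ge N}2e^{-2\epsilon^2 n}$ admits a tail bound computable from $\epsilon$, for each $k$ one can compute $N(k)$ with $\sum_{n\ge N(k)}2e^{-2\epsilon^2 n}\le 2^{-k}$; then $U_k:=\bigcup_{n\ge N(k)}\{Y:|\rho_n(\pi(Y))-r|>\epsilon\}$ is a uniformly $\Sigma^0_1(r)$ sequence with $\mu_r(U_k)\le 2^{-k}$, i.e. $\{U_k\}_{k\in\omega}$ is a $\mu_r$-ML test in the sense of Reimann and Slaman (all effectivity taken relative to $r$, which determines $\mu_r$). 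Since $X$ is $r$-random, $X\notin\bigcap_k U_k$, so there is a $k$ with $|\rho_n(\pi(X))-r|\le\epsilon$ for all $n\ge N(k)$; in particular $\limsup_n|\rho_n(\pi(X))-r|\le\epsilon$.

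Letting $\epsilon$ range over the positive rationals — countably many statements, each of which has just been verified — yields $\lim_n\rho_n(\pi(X))=r$, i.e. $\rho(\pi(X))=r$. As $\pi$ was an arbitrary computable permutation, $\overline P(X)=\underline P(X)=r$, and hence $P(X)=r$.

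The step I expect to require the most care is the effective law of large numbers: one must confirm that the approximating sets are genuinely $\Sigma^0_1(r)$, that the thresholds $N(k)$ are computable (from $\epsilon$ alone, in fact), and that the resulting object is a legitimate test in the Reimann–Slaman framework for the possibly noncomputable measure $\mu_r$; everything else — the Hoeffding bound itself, the reduction to finitely many bits, and the closing countable intersection over $\epsilon$ — is routine. An essentially equivalent alternative would be to first observe that $\mu_r$ is invariant under the coordinate permutation induced by $\pi$ (being a product measure), deduce that $\pi(X)$ is again $r$-random, and then apply the effective strong law directly to $\pi(X)$; I would mention this but carry out the argument above, which folds $\pi$ into the test and so avoids separately proving permutation-invariance of $r$-randomness.
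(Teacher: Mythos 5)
Your proof is correct and is precisely the ``standard argument for $\frac{1}{2}$ generalized to $r$'' that the paper invokes without writing out (citing Nies, Theorem 3.5.21): a Hoeffding/Chernoff-based Martin--Löf test relative to $r$, with the computable permutation either folded into the test as you do or, equivalently, handled by observing that the product measure $\mu_r$ is invariant under coordinate permutations so that $\pi(X)$ is again $r$-random. Since the paper supplies only the citation and the remark that the argument relativizes, your write-up fills in exactly the intended proof; the only point worth double-checking in a final version is that the $\Sigma^0_1(r)$ description of the finitely-supported sets and the computation of the thresholds $N(k)$ from $\epsilon$ are uniform, which you already flag.
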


Standard arguments for $\frac{1}{2}$ can be generalized to $r$ to prove this. It is true for $r$-Schnorr randomness. See, for example, Nies \cite[Theorem 3.5.21]{Nies} for the proof in the case $r=\frac{1}{2}$.
\\
\\
Thus every real in the unit interval is achieved as the intrinsic density of some set. Furthermore, we will not be able to find an ML-random set which does not have intrinsic density as was done for computable randomness and MWC stochasticity by Ambos-Spies \cite{Ambosspies} and was done for Schnorr randomness and Church stochasticity by Wang \cite{wang}. However, it is still possible to find sets which have intrinsic density $r$ but are not $r$-random. We shall not only do this, but in fact provide examples which cannot even compute an $r$-random set. To achieve this, we shall next develop some tools which work well with both asymptotic and intrinsic density and allow us to change densities. Computable operations like the join can only preserve intrinsic density, and set operations such as the intersection and union behave unpredictably with respect to asymptotic density.
\\
\\
We shall follow the convention, unless otherwise stated, that capital English letters represent sets of natural numbers and the lowercase variant, indexed by a subscript of natural numbers, represents the elements of the set. As an example, if $E$ is the set of even numbers, then $e_n=2n$. Recall that the principal function for a set $A$, $p_A$, is defined via $p_A(n)=a_n$.
\\
\\
Using this representation, it is not hard to see the following characterization of upper and lower density:

\begin{lemma}
\label{technicallimit}
Let $A\subseteq\omega$ be $\{a_0<a_1<a_2<\dots\}$. Then
\begin{itemize}
    \item $\overline{\rho}(A)=\limsup_{n\to\infty}\frac{n}{a_n}$
    \item $\underline{\rho}(A)=\liminf_{n\to\infty}\frac{n}{a_n}$
\end{itemize}
\end{lemma}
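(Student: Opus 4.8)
The plan is to compare the two sequences $\rho_k(A)=|A\upharpoonright k|/k$ and $n/a_n$ directly, term by term. The starting observation is that the elements of $A$ lying strictly below $a_n$ are exactly $a_0<a_1<\dots<a_{n-1}$, so $|A\upharpoonright a_n|=n$ and hence $\rho_{a_n}(A)=n/a_n$. In other words $(n/a_n)_{n}$ is literally the subsequence of $(\rho_k(A))_k$ indexed by $k\in A$. Since passing to a subsequence can only decrease the $\limsup$ and increase the $\liminf$, this already gives $\limsup_n n/a_n\le\overline{\rho}(A)$ and $\liminf_n n/a_n\ge\underline{\rho}(A)$.

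For the reverse inequalities I would sandwich an arbitrary $\rho_k(A)$ between nearby terms of the sequence $n/a_n$. Given $k\ge 1$, put $n=n(k)=|A\upharpoonright k|$; then $a_{n-1}<k\le a_n$ and $\rho_k(A)=n/k$, so $\tfrac{n}{a_n}\le\rho_k(A)\le\tfrac{n}{a_{n-1}}$, and the right-hand side can be rewritten as $\tfrac{n}{n-1}\cdot\tfrac{n-1}{a_{n-1}}$. Because $A$ is infinite, $n(k)\to\infty$ as $k\to\infty$, so for any $\varepsilon>0$ and all large $k$ we have $\tfrac{n(k)}{a_{n(k)}}\ge\liminf_m\tfrac{m}{a_m}-\varepsilon$, $\tfrac{n(k)-1}{a_{n(k)-1}}\le\limsup_m\tfrac{m}{a_m}+\varepsilon$, and $\tfrac{n(k)}{n(k)-1}\le 1+\varepsilon$. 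Feeding these into the sandwich and letting $\varepsilon\to 0$ gives $\underline{\rho}(A)\ge\liminf_n n/a_n$ and $\overline{\rho}(A)\le\limsup_n n/a_n$. Combining with the previous paragraph yields the two claimed equalities.

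The only points needing a little care — the closest thing to an obstacle, and it is minor — are bookkeeping. The case $a_0=0$ (equivalently $n(k)=1$) makes the expression $n/a_{n-1}$ a division by zero, but it concerns only finitely many $k$ and so is irrelevant to $\limsup$ and $\liminf$; likewise $\rho_k(A)=0$ for $1\le k\le a_0$ when $a_0\ge 1$ is harmless. And one should note explicitly that, as $k$ ranges over $\omega$, the quantity $n(k)$ is nondecreasing, unbounded, and eventually takes every value, which is exactly what justifies replacing the running index $k$ by $n(k)$ inside the $\limsup$/$\liminf$ estimates above. With those remarks in place the argument is complete.
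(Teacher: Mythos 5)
Your proof is correct, and it rests on the same central observation the paper uses, namely that $\rho_{a_n}(A)=n/a_n$, so $(n/a_n)_n$ is the subsequence of $(\rho_k(A))_k$ along $k=a_n$. The two arguments diverge only in how they close the gap: the paper notes that $\rho_k(A)$ strictly decreases across blocks of zeros, so its $\limsup$ is realized along the ``peaks'' $k=a_n+1$ (giving the subsequence $\tfrac{n+1}{a_n+1}$, which has the same $\limsup$ as $\tfrac{n}{a_n}$) and its $\liminf$ along the ``valleys'' $k=a_n$, and reads the equalities off directly; you instead combine the one-sided subsequence inequality with an explicit sandwich $\tfrac{n(k)}{a_{n(k)}}\le\rho_k(A)\le\tfrac{n(k)}{n(k)-1}\cdot\tfrac{n(k)-1}{a_{n(k)-1}}$ and a limiting argument. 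The peak--valley observation is slightly slicker because it avoids the $\varepsilon$-bookkeeping and the $\tfrac{n}{n-1}$ correction factor, but your squeeze is equally rigorous, and you handle the finite number of degenerate indices (small $k$, $a_0=0$) correctly.
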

\begin{proof}
    Note that if $A\upharpoonright n+1$ has a $0$ in the final bit, then 
    \[\rho_n(A)=\frac{|A\upharpoonright n|}{n}>\frac{|A\upharpoonright n|}{n+1}=\rho_{n+1}(A)\]
    Therefore, to compute the upper density it suffices to check only those numbers $n$ for which $A\upharpoonright n$ has a $1$ as its last bit. Those numbers are exactly $a_n+1$ by the definition of $a_n$, and $|A\upharpoonright a_n+1|=n+1$. Therefore $\{\frac{n+1}{a_n+1}\}_{n\in\omega}$ is a subsequence of $\{\rho_n(A)\}_{n\in\omega}$ which dominates the original sequence, so $\overline{\rho}(A)=\limsup_{n\to\infty}\rho_n(A)=\limsup_{n\to\infty}\frac{n+1}{a_n+1}$. Finally, $\limsup_{n\to\infty}\frac{n+1}{a_n+1}=\limsup_{n\to\infty}\frac{n}{a_n}$.
    \\
    \\
    Similarly, to compute the lower density it suffices to check only the numbers $n$ such that the final digit of $A\upharpoonright n$ is a $0$, but the final digit of $A\upharpoonright n+1$ is a $1$. (That is, if there is a consecutive block of zeroes in the characteristic function of $A$, we only need to check the density at the end of the block when computing lower density, as each intermediate point of the zero block has a higher density than the end.) These numbers are exactly $a_n$ by definition, and $|A\upharpoonright a_n|=n$. Therefore $\{\frac{n}{a_n}\}_{n\in\omega}$ is a subsequence of $\{\rho_n(A)\}_{n\in\omega}$ which is dominated by the original sequence, so $\underline{\rho}(A)=\liminf_{n\to\infty}\rho_n(A)=\liminf_{n\to\infty}\frac{n}{a_n}$.
\end{proof}

A critical proof technique will involve proving that two sets $A$ and $B$ cannot have different intrinsic densities by creating a computable permutation which sends $A$ to $B$ modulo a set of density zero. The following lemma shows that if we can do this, then the density of the image of $A$ is the same as the density of $B$, and therefore that they cannot have different intrinsic densities.

\begin{lemma}
\label{densityzero}
If $\overline{\rho}(H)=0$, then $\overline{\rho}(X\setminus H)=\overline{\rho}(X\cup H)=\overline{\rho}(X)$ and $\underline{\rho}(X\setminus H)=\underline{\rho}(X\cup H)=\underline{\rho}(X)$.
\end{lemma}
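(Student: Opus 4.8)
The plan is to run a simple squeeze argument on the finite counting functions. The key preliminary observation is that $\overline{\rho}(H)=0$ is much stronger than it looks: since $\overline{\rho}(H)=\limsup_{n\to\infty}\rho_n(H)=0$ and $\rho_n(H)\geq 0$ always, we in fact have $\lim_{n\to\infty}\rho_n(H)=0$. So $H$ contributes a vanishing correction term at \emph{every} scale, not merely along a subsequence.

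Next I would record the three relevant set containments, $X\setminus H\subseteq X\subseteq X\cup H$, together with the fact that $X\cup H$ differs from $X$ only by elements of $H$ and likewise $X$ differs from $X\setminus H$ only by elements of $H$. Intersecting with $\{0,1,\dots,n-1\}$ and counting, this gives, for every $n$,
\[
|X\upharpoonright n|-|H\upharpoonright n|\ \leq\ |(X\setminus H)\upharpoonright n|\ \leq\ |X\upharpoonright n|\ \leq\ |(X\cup H)\upharpoonright n|\ \leq\ |X\upharpoonright n|+|H\upharpoonright n|.
\]
Dividing through by $n$ yields $\rho_n(X)-\rho_n(H)\leq\rho_n(X\setminus H)\leq\rho_n(X)\leq\rho_n(X\cup H)\leq\rho_n(X)+\rho_n(H)$.

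Finally I would pass to the limit. Since $\rho_n(H)\to 0$, a standard fact about $\limsup$ and $\liminf$ (they are unchanged by adding a null sequence, and are monotone under pointwise $\leq$) applied to the chain of inequalities above forces $\overline{\rho}(X\setminus H)=\overline{\rho}(X)=\overline{\rho}(X\cup H)$ and $\underline{\rho}(X\setminus H)=\underline{\rho}(X)=\underline{\rho}(X\cup H)$. I do not anticipate any real obstacle here; the only point requiring a modicum of care is the very first step, namely justifying that the hypothesis on the $\limsup$ upgrades to genuine convergence of $\rho_n(H)$ to $0$, which is what makes the correction term uniformly negligible rather than merely negligible along a subsequence.
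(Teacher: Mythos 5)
Your proof is correct. The observation that $\overline{\rho}(H)=0$ together with $\rho_n(H)\geq 0$ already forces $\lim_{n\to\infty}\rho_n(H)=0$ is sound and is indeed the point that makes the squeeze work at every $n$, not just along a subsequence. The chain of counting inequalities is valid, and passing to $\limsup$ and $\liminf$ (monotone under pointwise $\leq$, invariant under adding a null sequence) gives all four equalities at once.

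Your route differs from the paper's in packaging rather than in substance. The paper never upgrades the hypothesis to genuine convergence of $\rho_n(H)$; instead it writes the exact additive decomposition $\rho_n(X)=\rho_n(X\setminus H)+\rho_n(X\cap H)$, applies subadditivity of $\limsup$ to get $\overline{\rho}(X)\leq\overline{\rho}(X\setminus H)$, and then uses $X\setminus H\subseteq X$ for the reverse inequality, repeating the pattern for the union and for $\liminf$. Your squeeze treats $X\setminus H\subseteq X\subseteq X\cup H$ in a single chain and dispenses with all four equalities in one pass, at the small cost of the preliminary lemma about the null sequence. Either is fine; yours is a bit more uniform, the paper's a bit more modular (it only invokes one-sided subadditivity, which is the tool reused elsewhere in the paper, e.g.\ in Lemma~\ref{densitylimit}).
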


\begin{proof}
Notice that
\[\rho_n(X)=\rho_n(X\setminus H)+\rho_n(X\cap H)\]
By definition. Therefore
\[\overline{\rho}(X)=\limsup_{n\to\infty}\rho_n(X)=\limsup_{n\to\infty} \rho_n(X\setminus H)+\rho_n(X\cap H)\]
By subadditivity of the limit superior,
\[\overline{\rho}(X)\leq\limsup_{n\to\infty} \rho_n(X\setminus H)+\limsup_{n\to\infty} \rho_n(X\cap H)\]
As $\overline{\rho}(H)=0$ and $X\cap H\subseteq H$,
\[\overline{\rho}(X)\leq\limsup_{n\to\infty} \rho_n(X\setminus H)=\overline{\rho}(X\setminus H)\]
However, $\overline{\rho}(X\setminus H)\leq\overline{\rho}(X)$ because $X\setminus H\subseteq X$, so $\overline{\rho}(X)=\overline{\rho}(X\setminus H)$ as desired.
\\
\\
The argument for the union and the argument for lower density are functionally identical. (For the union we use $X\cup H$, $X$, and $H\setminus X$ in place of $X$, $X\setminus H$, and $X\cap H$ respectively.)
\end{proof}

Our strategy is to find some process which takes a set $A$ of intrinsic density $\alpha$ and a set $B$ of intrinsic density $\beta$ and codes $B$ and $A$ in such a way that we are left with a set which has new intrinsic density obtained as some function of $\alpha$ and $\beta$. The following lemma will be useful for computing intrinsic densities.

\begin{lemma}
\label{permutationlemma}
Let $f_0,f_1,\dots,f_k$ be a finite collection of injective computable functions and let $C$ be an infinite computable set. Then there is an infinite computable set $H\subseteq C$ such that $\overline{\rho}(f_i(H))=0$ for all $i$.
\end{lemma}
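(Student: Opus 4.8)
The plan is to build $H=\{h_0<h_1<h_2<\cdots\}$ as a very sparse computable subset of $C$ — sparse enough that each image $f_i(H)$ has only logarithmically many elements below any bound $N$, which forces $\overline{\rho}(f_i(H))=0$. The construction is an effective recursion: having chosen $h_0<\cdots<h_{n-1}$, all in $C$, let $h_n$ be the least element of $C$ with $h_n>h_{n-1}$ and $f_i(h_n)\geq 2^n$ for every $i\leq k$.

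First I would check that this recursion is total and computable. Since each $f_i$ is injective, the preimage $\{m : f_i(m)<2^n\}$ of the finite set $\{0,1,\dots,2^n-1\}$ is finite, so the union over $i\leq k$ of these preimages is a finite set $E_n$. As $C$ is infinite, $C\setminus(E_n\cup\{0,\dots,h_{n-1}\})$ is nonempty, so the search for $h_n$ terminates; each step requires only deciding membership in $C$ and evaluating the finitely many $f_i$, all computable. Hence $H$ is an infinite computable subset of $C$: to decide whether $x\in H$, run the recursion until some $h_n\geq x$ and check whether $x$ occurred among $h_0,\dots,h_n$.

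Next I would bound the density of each $f_i(H)$. Fix $i$ and $N$. If $f_i(h_j)<N$ then, by construction, $2^j\leq f_i(h_j)<N$, so $j<\log_2 N$ (the value $j=0$ imposing no constraint). Since $f_i$ is injective, $|f_i(H)\upharpoonright N|=|\{j:f_i(h_j)<N\}|\leq\log_2 N+1$, so $\rho_N(f_i(H))\leq(\log_2 N+1)/N$. Letting $N\to\infty$ gives $\overline{\rho}(f_i(H))=0$, as desired.

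The only real subtlety — the point I would flag as needing care — is the effectivity of the construction: one cannot simply invoke a modulus for the fact that $f_i(m)\to\infty$, since such a modulus need not be computable for a general injective computable $f_i$. The recursion sidesteps this by performing at each stage an unbounded but provably terminating search, using only that $f_i$ has finite preimages of finite sets. Everything else is a routine counting estimate, and the growth rate $2^n$ could be replaced by any computable rate tending to infinity (e.g.\ $n^2$) with the same effect.
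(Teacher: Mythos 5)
Your construction is essentially the paper's: both build $H\subseteq C$ by an effective recursion forcing $f_i(h_n)$ to grow rapidly (you use threshold $2^n$, the paper uses $h_{n-1}!$), and both conclude $\overline{\rho}(f_i(H))=0$ by the same counting estimate on $|f_i(H)\upharpoonright N|$. Your write-up is somewhat more explicit about termination of the search and computability of $H$, but the idea is the same, and as you note the particular growth rate is immaterial.
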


\begin{proof}
Let $h_0=c_0$. Then given $h_n$, define $h_{n+1}$ to be the least element $c$ of $C$ with $f_i(c)\geq h_n!$ for all $i$. Set $H=\{h_0<h_1<h_2<\dots\}$. Then $\overline{\rho}(f_i(H))=0$ for all $i$ because $|f_i(H)\upharpoonright n|\leq |\{n!:n\in\omega\}\upharpoonright n|$.
\end{proof}

We shall see in Section 2 that we cannot hope for such a process to be computable in a way that allows us to recover the original sets. Instead we shall use the following noncomputable coding methods. They are natural and have been used informally by others such as Jockusch and Astor.

\begin{definition}
\label{intowithin}
    Let $A$ and $B$ be sets of natural numbers.
    \begin{itemize}
        \item  $B\into A$, or $B$ into $A$, is
        \[\{a_{b_0}<a_{b_1}<a_{b_2}<\dots\}\]
        That is, $B\into A$ is the subset of $A$ obtained by taking the ``$B$-th elements of $A$.'' 
        \item $B\within A$, or $B$ within $A$, is
        \[\{n:a_n\in B\}\]
        That is, $B\within A$ is the set $X$ such that $X\into A=A\cap B$.
    \end{itemize}
\end{definition}

With $B\into A$, we are thinking of $A$ as a copy of $\omega$ as a well-order and $B\into A$ is the subset corresponding to $B$ under the order preserving isomorphism between $A$ and $\omega$. We make a few elementary observations to illustrate how these operations behave:
\begin{itemize}
    \item For all $A$, $A=A\into \omega=\omega\into A=A\within\omega$.
    \item For all $A$ and $B$ and any $i$, $a_i$ is either in $B$ or $\overline{B}$. Therefore $i$ is either in $B\within A$ or $\overline{B}\within A$ respectively, so $(B\within A)\sqcup(\overline{B}\within A)=\omega$.
    \item If $B\cap C=\emptyset$, then $(B\into A)\cap(C\into A)=\emptyset$. Furthermore, $A=(X\into A)\sqcup(\overline{X}\into A)$.
    \item $\into$ is associative, i.e.\ $B\into(A\into C)=(B\into A)\into C$: By definition, $(A\into C)=\{c_{a_0}<c_{a_1}<c_{a_2}<\dots\}$ and thus 
        \[B\into (A\into C)=\{c_{a_{b_0}}<c_{a_{b_1}}<c_{a_{b_2}}<\dots\}\]
        Similarly, $(B\into A)=\{a_{b_0}<a_{b_1}<a_{b_2}<\dots\}$, and therefore by definition 
        \[(B\into A)\into C=\{c_{a_{b_0}}<c_{a_{b_1}}<c_{a_{b_2}}<\dots\}\]
    \item $\within$ is not associative: Consider the set of evens $E$, the set of odds $O$, and the set $N$ of evens which are not multiples of $4$. Then
    \[(O\within N)\within E=\emptyset\within N=\emptyset\]
    However,
    \[O\within(N\within E)=O\within O=\omega\]
    \item $\into$ and $\within$ do not associate with each other in general:
    \[B\into(A\within(B\into A))=B\into\omega=B\]
    but
    \[(B\into A)\within(B\into A)=\omega\]
    Similarly, $B\within(A\into B)=\omega$, but $(B\within A)\into B$ is a subset of $B$.
\end{itemize}

In Section 2, we shall construct examples of sets which have intrinsic density $r$ but are not $r$-random. However, all of these examples will trivially compute $r$-random sets, so we shall then turn our attention to constructing examples which cannot. In Section 3 we shall prove our main technical theorems which allow us to complete the construction in Section 4.

\section{Nonrandom sets of intrinsic density $r$}

The following theorem allows us to easily generate sets of intrinsic density $r$  which are not $r$-random from Proposition \ref{randomdensity}.

\begin{theorem}
\label{join}
$P(A\oplus B)=r$ if and only if $P(A)=P(B)=r$.
\end{theorem}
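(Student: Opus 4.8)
The plan is to prove both directions by exhibiting computable permutations that realize the join $A \oplus B$, its "even half" $A$, and its "odd half" $B$ as principal-function rearrangements of one another, modulo density-zero errors, and then invoke Lemma \ref{densityzero} together with Lemma \ref{technicallimit}. Recall that $A \oplus B = \{2a : a \in A\} \cup \{2b+1 : b \in B\}$, so that on the even coordinates $A \oplus B$ looks like a "stretched" copy of $A$ and on the odd coordinates a stretched copy of $B$.

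For the easier direction ($P(A \oplus B) = r \Rightarrow P(A) = P(B) = r$), I would argue contrapositively. Suppose $P(A) \neq r$; then there is a computable permutation $\pi$ with $\rho(\pi(A))$ either undefined or not equal to $r$. Lift $\pi$ to a computable permutation $\widehat{\pi}$ of $\omega$ that acts as $2n \mapsto 2\pi(n)$ on the evens and fixes the odds. Then $\widehat{\pi}(A \oplus B) = \pi(A) \oplus B$, and I need to show that the density of $\pi(A) \oplus B$ is not $r$ (or fails to exist) whenever $\rho(\pi(A)) \neq r$. The point is that interleaving with $B$ — which, a priori, need not even have a density — cannot "repair" a bad density in the even coordinates: using $\rho_{2n}(C \oplus D) = \tfrac{1}{2}\rho_n(C) + \tfrac{1}{2}\rho_n(D)$ and the analogous formula at odd stages, one checks that if $\rho_n(\pi(A))$ has two subsequential limits $\alpha_1 \neq \alpha_2$, then $\rho_n(\pi(A) \oplus B)$ has subsequential limits differing by at least $\tfrac{1}{2}|\alpha_1 - \alpha_2|$, and if $\rho(\pi(A)) = \alpha \neq r$ exists then a further computable permutation can be used to push the whole configuration away from density $r$. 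Actually the cleanest route: if $A$ does not have intrinsic density $r$, then neither does $A \oplus \emptyset$ or, better, compose with a permutation realizing the bad density on $A$ and with Lemma \ref{permutationlemma} applied to the odd side to thin $B$ down to density zero, so that $\widehat\pi(A\oplus B)$ equals (a stretched copy of $\pi(A)$) $\cup$ (a density-zero set); by Lemma \ref{densityzero} and Lemma \ref{technicallimit} this has the same upper/lower densities as the stretched copy of $\pi(A)$, namely $\tfrac12\overline\rho(\pi(A))$ and $\tfrac12\underline\rho(\pi(A))$, which are not both equal to $\tfrac12 \cdot \tfrac{?}{}$ — here one must be careful: the stretched copy $\{2c : c \in \pi(A)\}$ inside $\omega$ has density $\tfrac12 \rho(\pi(A))$, not $\rho(\pi(A))$, so I actually want to combine the even side with a density-zero thinning of $B$ \emph{and} then apply a permutation of $\omega$ that "unstretches," i.e. sends $\{2c\}$ back to a copy of $\pi(A)$ occupying a set of density $1$'s worth of room. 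This is exactly the kind of permutation one builds by hand.

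For the main direction ($P(A) = P(B) = r \Rightarrow P(A \oplus B) = r$), the heart of the matter is: given $P(A) = P(B) = r$, show that every computable permutation $\tau$ of $\omega$ sends $A \oplus B$ to a set of density $r$. The obstacle is that a general $\tau$ scrambles the even and odd coordinates together, so I cannot directly read off $A$ and $B$. The standard move — and I expect this to be the crux — is to reduce to the case where $\tau$ respects the even/odd splitting up to a density-zero error. Concretely, I would show that for any computable permutation $\tau$, the set $\tau(A \oplus B)$ agrees, modulo a density-zero set, with $\tau_0(A) \oplus' \tau_1(B)$ for suitable computable injections $\tau_0, \tau_1$ and a suitable "generalized join" along a computable partition $\omega = S_0 \sqcup S_1$ with $\rho(S_0) = \rho(S_1) = \tfrac12$; then since $\rho(\tau_0(A))$ and $\rho(\tau_1(B))$ each equal $r$ (as $P(A) = P(B) = r$ and the images under computable injections of intrinsic-density-$r$ sets still have density $r$ on their range — this needs Lemma \ref{permutationlemma} to extend injections to permutations modulo density zero), a computation with $\rho_n$ along $S_0, S_1$ gives $\rho_n(\tau(A\oplus B)) \to r$. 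The key subclaim that makes the reduction work: if $\tau$ is a computable permutation, one can find a computable permutation $\sigma$ such that $\sigma \circ \tau$ maps the evens into $S_0$ and the odds into $S_1$ modulo finitely much, or more realistically modulo density zero, using Lemma \ref{permutationlemma} to absorb the overlap into a density-zero set. I would isolate this as a sublemma and prove it by a back-and-forth construction of $\sigma$, interleaving "reserve the next block for the image of an even" and "reserve the next block for the image of an odd" while dumping any forced collisions into a sparse (factorial-growth) reservoir à la Lemma \ref{permutationlemma}.

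Once that reduction is in place, the final computation is routine: writing $A \oplus B$ (after the reduction) as a disjoint union along a density-$\tfrac12$ partition of two sets each of density $r$ relative to their half, Lemma \ref{densityzero} lets me discard the density-zero discrepancy, and $\rho_n$ is a weighted average converging to $\tfrac12 r + \tfrac12 r = r$. I expect the main obstacle to be precisely the sublemma that any computable permutation can be "straightened" to respect the even/odd partition modulo density zero; everything else is bookkeeping with Lemmas \ref{technicallimit}, \ref{densityzero}, and \ref{permutationlemma}.
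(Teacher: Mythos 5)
Your forward direction is essentially the paper's: thin one side of the join onto a sparse (density-zero) set and ``unstretch'' the other side to recover a copy of $\pi(A)$ modulo density zero; after your self-correction about the factor of $\tfrac12$ this is the same construction as the paper's $\hat\pi$ (odds onto factorials, evens onto non-factorials via $\pi$), and it gives $\overline P(A\oplus B)\ge\overline P(A)$, $\underline P(A\oplus B)\le\underline P(A)$ and symmetrically for $B$.

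The reverse direction, however, has a real gap. You propose to reduce to a ``generalized join along a computable partition $\omega=S_0\sqcup S_1$ with $\rho(S_0)=\rho(S_1)=\tfrac12$,'' with a straightening sublemma producing a permutation $\sigma$ so that $\sigma\circ\tau$ respects that partition, and then conclude via $\tfrac12 r+\tfrac12 r=r$. There are two independent problems. First, post-composing with $\sigma$ changes the set whose density you must compute: knowing $\rho(\sigma(\tau(A\oplus B)))=r$ says nothing about $\rho(\tau(A\oplus B))$, which is the quantity $P(A\oplus B)=r$ requires you to control for \emph{every} computable $\tau$. Second, the density-$\tfrac12$ assumption is simply unavailable: for a general computable permutation $\tau$, the sets $\tau(E)$ and $\tau(O)$ need not have density $\tfrac12$ --- they may have any density, or no density at all (take $\tau$ to place the evens in alternating long blocks of local density near $1$ and near $0$). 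Since density is invariant under density-zero modifications (Lemma \ref{densityzero}), no amount of ``absorbing collisions into a sparse reservoir'' can move $\tau(E)$ to density $\tfrac12$. The paper avoids this entirely: the correct key step is to show that the \emph{conditional} densities $\rho(\pi(A\oplus B)\within\pi(E))$ and $\rho(\pi(A\oplus B)\within\pi(O))$ both equal $r$ (this is Lemma \ref{joinlemma}, proved by exactly the kind of injection-to-permutation massaging via Lemma \ref{permutationlemma} that you have in mind), and then to observe that $\rho_n(\pi(A\oplus B))$ is a weighted average of the two conditional density sequences with weights $\rho_n(\pi(E))$ and $1-\rho_n(\pi(E))$. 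Because the two conditional sequences converge to the \emph{same} limit $r$, the weighted average converges to $r$ regardless of whether the weights converge --- you only use that they are bounded in $[0,1]$. Your proposal implicitly requires the weights to converge to $\tfrac12$; that is the step that fails, and dropping it is precisely the point of the paper's rearrangement $\rho_k(\cdot\within\pi(O))+\rho_n(\pi(E))\bigl(\rho_m(\cdot\within\pi(E))-\rho_k(\cdot\within\pi(O))\bigr)$.
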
 

\begin{proof}
We shall first argue the forward direction via contrapositive. We proceed by showing that, for any given computable permutation $\pi$, there are computable permutations which send $A\oplus B$ to $\pi(A)$ modulo a set of density $0$, and similarly for $\pi(B)$. Then the upper (and lower) density of $A\oplus B$ under these permutations will match that of $\pi(A)$ and $\pi(B)$ respectively. Therefore if $P(A)\neq P(B)$, the density of $A\oplus B$ is not invariant under computable permutation.
\\
\\
Let $F=\{n!:n\in\omega\}$ and $G=\overline{F}$. For any fixed computable permutation $\pi$, there is another computable permutation $\hat{\pi}$ defined via enumerating the odds onto the factorials in order and enumerating the evens onto the nonfactorials according to the ordering induced by $\pi$. That is, $\hat{\pi}(2n+1)=f_n$ and $\hat{\pi}(2n)=g_{\pi(n)}$.
\\
\\
Then as $F$ has density $0$, Lemma \ref{densityzero} shows
\[\overline{\rho}(\hat{\pi}(A\oplus B))=\overline{\rho}(\hat{\pi}(A\oplus B)\setminus F)\]
As the image of the odds under $\hat{\pi}$ is a subset of $F$, 
\[\hat{\pi}(A\oplus B)\setminus F=\hat{\pi}(A\oplus\emptyset)\]
and
\[\overline{\rho}(\hat{\pi}(A\oplus B))=\overline{\rho}(\hat{\pi}(A\oplus \emptyset))\]
Notice that $\hat{\pi}(A\oplus\emptyset)$ is just $\pi(A)$ with each element $n$ increased by $|F\upharpoonright n|$. Thus 
\[\rho_n(\pi(A))\geq\rho_n(\hat{\pi}(A\oplus\emptyset))\geq\frac{|\pi(A)\upharpoonright n|-|F\upharpoonright n|}{n}\]
As $F$ is the factorials, the final expression tends to $\rho_n(\pi(A))$ in the limit, so we see that
\[\overline{\rho}(\hat{\pi}(A\oplus\emptyset))=\overline{\rho}(\pi(A))\]
and
\[\overline{\rho}(\hat{\pi}(A\oplus B))=\overline{\rho}(\hat{\pi}(A\oplus\emptyset))=\overline{\rho}(\pi(A))\]
$\underline{\rho}(\hat{\pi}(A\oplus B))=\underline{\rho}(\pi(A))$ by a nearly identical argument.
\\
\\
In particular, $\overline{P}(A\oplus B)\geq \overline{P}(A)$ and $\underline{P}(A\oplus B)\leq \underline{P}(A)$ because we are taking the limit superior and inferior over all computable permutations, of which $\hat{\pi}$ is but one. Reversing the use of the evens and the odds in the definition of $\hat{\pi}$, we get that the same is true for $B$ in place of $A$, so $\underline{P}(A\oplus B)\leq min(\underline{P}(A),\underline{P}(B))$ and $\overline{P}(A\oplus B)\geq max(\overline{P}(A),\overline{P}(B))$. Therefore if $P(A)\neq P(B)$, $\underline{P}(A\oplus B)\neq\overline{P}(A\oplus B)$ as desired.
\\
\\
We now prove the reverse direction using a technical lemma.
    
\begin{sublemma}
\label{joinlemma}
    Let $\pi$ be a computable permutation and let $P(A)=P(B)=r$. Then
    \[\rho(\pi(A\oplus B)\within\pi(E))=\rho(\pi(A\oplus B)\within\pi(O))=r\]
\end{sublemma}
    
\begin{proof}
    Let $h:\pi(E)\to\omega$ send the $n$-th element of $\pi(E)$ to $n$ (the inverse of the principal function), and let $d:\omega\to E$ be defined via $d(n)=2n$. Then notice that $d(A)=A\oplus\emptyset$. Furthermore, observe that for any $X\subseteq\pi(E)$, $h(X)=X\within \pi(E)$ by the definition of $h$ and the {\tt within} operation. Therefore
    \[h(\pi(d(A)))=h(\pi(A\oplus\emptyset))=\pi(A\oplus\emptyset)\within\pi(E)\]
    As $\pi(A\oplus B)\cap \pi(E)\subseteq \pi(A\oplus \emptyset)$,
    \[\pi(A\oplus\emptyset)\within\pi(E)=\pi(A\oplus B)\within\pi(E)\]
    Thus $h(\pi(d(A)))=\pi(A\oplus B)\within\pi(E)$. We shall now massage $h$ and $d$ into permutations which preserve the relevant densities.
    \\
    \\
    By Lemma \ref{permutationlemma}, there is a computable set $H\subseteq \pi(E)$ with $\overline{\rho}(h(H))=0$. Now define the computable permutation $\pi_h$ via $\pi_h(n)=h(n)$ for $n\in\pi(E)\setminus H$, and have $\pi_h$ enumerate $\pi(O)\sqcup H$ onto $h(H)$ in order. Similarly, define the computable permutation $\pi_d$ via $\pi_d(n)=d(n)$ for $n\in\omega\setminus d^{-1}(\pi^{-1}(H))$, and have $\pi_d$ enumerate $d^{-1}(\pi^{-1}(H))$ onto $O\sqcup \pi^{-1}(H)$. 
    \\
    \\
    As $\pi_d$ agrees with $d$ on $\overline{d^{-1}(\pi^{-1}(H))}$, we now see that
    \[\pi_d(A\setminus \pi_d^{-1}(\pi^{-1}(H)))=(A\oplus\emptyset)\setminus \pi^{-1}(H)\]
    Furthermore, applying $\pi$ shows that
    \[\pi(\pi_d(A\setminus \pi_d^{-1}(\pi^{-1}(H))))=\pi((A\oplus\emptyset)\setminus \pi^{-1}(H))=\pi(A\oplus\emptyset)\setminus H\]
    As $\pi_h$ agrees with $h$ on $\pi(E)\setminus H$ and $h(\pi(A\oplus\emptyset))=\pi(A\oplus B)\within\pi(E)$, we have
    \[\pi_h(\pi(A\oplus\emptyset)\setminus H)=(\pi(A\oplus B)\within\pi(E))\setminus h(H)\]
    Therefore $(\pi(A\oplus B)\within\pi(E))\setminus h(H)\subseteq\pi_h(\pi(\pi_d(A)))$ and     $\pi_h(\pi(\pi_d(A)))\subseteq (\pi(A\oplus B)\within\pi(E))\cup h(H)$.
    \\
    \\
    By choice of $H$, $\overline{\rho}(h(H))=0$, so Lemma \ref{densityzero} shows that
    \[\overline{\rho}(\pi_h(\pi(\pi_d(A))))=\overline{\rho}((\pi(A\oplus B)\within\pi(E))\setminus h(H))=\overline{\rho}(\pi(A\oplus B)\within\pi(E))\]
    and
    \[\underline{\rho}(\pi_h(\pi(\pi_d(A))))=\underline{\rho}((\pi(A\oplus B)\within\pi(E))\setminus    h(H))=\underline{\rho}(\pi(A\oplus B)\within\pi(E))\]
    Therefore, as $P(A)=r$ and $\pi_h\circ\pi\circ\pi_d$ is a computable permutation, 
    \[\rho(\pi(A\oplus B)\within\pi(E))=r\]
    A nearly identical argument with $O$ in place of $E$ and $B$ in place of $A$ shows similarly that
    \[\rho(\pi(A\oplus B)\within\pi(O))=r\]
\end{proof}
    
We shall now show that this implies that $\rho(\pi(A\oplus B))=r$.
Consider $\rho_n(\pi(A\oplus B))$. By definition,
\[\rho_n(\pi(A\oplus B))=\frac{|\pi(A\oplus B)\upharpoonright n|}{n}\]
As $\omega=\pi(E)\sqcup\pi(O)$,
\[\frac{|\pi(A\oplus B)\upharpoonright n|}{n}=\frac{|\pi(A\oplus B)\cap\pi(E)\upharpoonright n|+|\pi(A\oplus B)\cap\pi(O)\upharpoonright n|}{n}\]
The latter expression can be rewritten as
\[\frac{|\pi(E)\upharpoonright n|}{|\pi(E)\upharpoonright n|}\cdot\frac{|\pi(A\oplus B)\cap\pi(E)\upharpoonright n|}{n}+\frac{|\pi(O)\upharpoonright n|}{|\pi(O)\upharpoonright n|}\cdot\frac{|\pi(A\oplus B)\cap\pi(O)\upharpoonright n|}{n}\]
Let $m$ be the largest number such that the $m$-th element of $\pi(E)$ is less than $n$, and let $k$ be the analogous number for $\pi(O)$. Now notice that
\[\frac{|\pi(A\oplus B)\cap\pi(E)\upharpoonright n|}{|\pi(E)\upharpoonright n|}=\rho_m(\pi(A\oplus B)\within\pi(E))\]
and
\[\frac{|\pi(A\oplus B)\cap\pi(O)\upharpoonright n|}{|\pi(O)\upharpoonright n|}=\rho_k(\pi(A\oplus B)\within\pi(O))\]
by the definition of the {\tt within} operation. Therefore, we can rewrite $\rho_n(\pi(A\oplus B))$ as  \[\rho_m(\pi(A\oplus B)\within\pi(E))\cdot\rho_n(\pi(E))+\rho_k(\pi(A\oplus B)\within\pi(O))\cdot\rho_n(\pi(O))\]
Using the fact that $\rho_n(\pi(E))+\rho_n(\pi(O))=1$,
\[\rho_n(\pi(A\oplus B))=\rho_m(\pi(A\oplus B)\within\pi(E))\cdot\rho_n(\pi(E))+\rho_k(\pi(A\oplus B)\within\pi(O))\cdot(1-\rho_n(\pi(E)))\]
Rearranging, this is equal to
\[\rho_k(\pi(A\oplus B)\within\pi(O))+\rho_n(\pi(E))\cdot(\rho_m(\pi(A\oplus B)\within\pi(E))-\rho_k(\pi(A\oplus B)\within\pi(O)))\]
Taking the limit as $n$ goes to infinity, $m$ and $k$ both go to infinity. Thus 
\[\rho_m(\pi(A\oplus B)\within\pi(E))-\rho_k(\pi(A\oplus B)\within\pi(O))\]
goes to $0$ by Lemma \ref{joinlemma}. As $\rho_n(\pi(E))$ is bounded between $0$ and $1$ by definition, the second term vanishes. Therefore
\[\lim_{n\to\infty}\rho_n(\pi(A\oplus B))=\lim_{n\to\infty} \rho_k(\pi(A\oplus B)\within\pi(O))=\rho(\pi(A\oplus B)\within\pi(O))=r\]
as desired.
\end{proof}

Using Theorem \ref{join}, we may take any set $X$ which is $r$-random and then $X\oplus X$ has intrinsic density $r$ but is not random as desired. However, this is merely a structural difference between the notion of randomness and the notion of intrinsic density as opposed to a computational difference.

\section{Core Theorems}

We would now like to build examples of sets with intrinsic density $r$ which cannot compute an $r$-random set. To do so, we shall prove two main theorems which allow us to apply the {\tt into} and {\tt within} operations to study asymptotic and intrinsic density.

\begin{theorem}
\label{theoremwithin}
Let $C$ be computable and $P(A)=r$. Then $P(A\within C)=r$.
\end{theorem}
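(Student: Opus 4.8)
The plan is to reduce the statement to Theorem~\ref{join} by producing a single computable permutation that converts ``within the computable set $C$'' into ``one coordinate of a join.'' We may assume $C$ is infinite, since otherwise $A\within C$ is finite.

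Suppose first that $C$ is also coinfinite, and write $C=\{c_0<c_1<c_2<\dots\}$ and $\overline{C}=\{d_0<d_1<d_2<\dots\}$. Since $C$ is computable, the map $\sigma$ with $\sigma(c_n)=2n$ and $\sigma(d_n)=2n+1$ is a computable permutation of $\omega$. The crucial step will be the identity
\[\sigma(A)=(A\within C)\oplus(A\within\overline{C}),\]
which should come straight from the definitions: $A\cap C=\{c_n:n\in A\within C\}$ and $A\cap\overline{C}=\{d_n:n\in A\within\overline{C}\}$ by the definition of $\within$, so $\sigma$ carries these onto $\{2n:n\in A\within C\}$ and $\{2n+1:n\in A\within\overline{C}\}$, whose union is $(A\within C)\oplus(A\within\overline{C})$. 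Granting this, intrinsic density is invariant under computable permutations (for any computable permutation $\tau$, $\tau\circ\sigma$ is again a computable permutation, so $\rho(\tau(\sigma(A)))=\rho((\tau\circ\sigma)(A))=r$), hence $P(\sigma(A))=r$; that is, $P\bigl((A\within C)\oplus(A\within\overline{C})\bigr)=r$. Theorem~\ref{join} then gives $P(A\within C)=P(A\within\overline{C})=r$ in one stroke.

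There remains the case where $C$ is cofinite, in which no such splitting permutation $\sigma$ exists. Here $c_n=n+k$ for all sufficiently large $n$, where $k=|\overline{C}|$, so $A\within C$ agrees, off a finite set, with the shifted set $S=\{n:n+k\in A\}$. I would then check directly that $P(S)=r$: since $P(A)=r$ makes both $A$ and $\overline{A}$ infinite, there is a computable permutation $\tau$ sending $S$ onto $A\cap[k,\infty)$ via $n\mapsto n+k$ and absorbing $[0,k)$ into the complement; then $S$ and $\tau^{-1}(A)$ differ by a finite set, so Lemma~\ref{densityzero} (applied after an arbitrary computable permutation) together with permutation-invariance yields $P(S)=r$, and hence $P(A\within C)=r$ as well. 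The fully degenerate subcases, where $A$ or $\overline{A}$ is finite, are immediate.

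I expect the only real work to be verifying the identity $\sigma(A)=(A\within C)\oplus(A\within\overline{C})$ and the routine bookkeeping in the cofinite case; once the identity is in place, Theorem~\ref{join} does all the heavy lifting, and there is no genuine obstacle.
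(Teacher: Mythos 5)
Your main case ($C$ infinite and coinfinite) is correct, and it is a genuinely different route from the paper's. The paper proves this theorem directly by the same machinery it uses for Theorem~\ref{core}: it takes $f(c_n)=n$, invokes Lemma~\ref{permutationlemma} to pick $H\subseteq C$ with $\overline{\rho}(\pi(f(H)))=0$, and builds a computable permutation $\pi_f$ agreeing with $f$ off $H$. You instead notice that when $\overline{C}$ is also infinite, the map $\sigma(c_n)=2n,\ \sigma(d_n)=2n+1$ is already a computable permutation, so no density-zero correction set is needed, and the identity $\sigma(A)=(A\within C)\oplus(A\within\overline{C})$ (which checks out from Definition~\ref{intowithin}) hands the problem to Theorem~\ref{join}. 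This is a clean structural reduction and shows that, in the coinfinite case, Theorem~\ref{theoremwithin} is essentially a corollary of Theorem~\ref{join}. What the paper's approach buys in exchange is uniformity: it needs no case split on $|\overline{C}|$, and the same $\pi_f$/$\pi_g$ technique is reused almost verbatim in Theorem~\ref{core}, so the reader sees the method once and recognizes it later.

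The cofinite case, however, has a real gap. There is no computable permutation $\tau$ of $\omega$ that agrees with $n\mapsto n+k$ on a cofinite set when $k\geq1$: if $\tau$ agreed with the shift outside a finite set $F$, then $\tau$ would have to biject $F$ onto $[0,k)\cup(F+k)$, a set of size $|F|+k$, which is impossible. The alternative reading of your $\tau$ (``sending $S$ onto $A\cap[k,\infty)$'') requires $\tau$ to know $S$, which depends on the noncomputable $A$, so that $\tau$ would not be a computable permutation and $\pi\circ\tau^{-1}$ could not be used against $P(A)=r$. The correct statement is only that one can make $\tau$ agree with the shift off a density-zero set, and then $\tau(S)\triangle A$ has density zero rather than being finite; at that point you need $\pi\circ\tau^{-1}$ to carry that density-zero set to another density-zero set, which is exactly what Lemma~\ref{permutationlemma} arranges and what your finite-set argument was trying to avoid. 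Two clean fixes: (i) run the paper's Lemma~\ref{permutationlemma} argument for this one case, or (ii) stay within your own framework by splitting $C=C_1\sqcup C_2$ into two infinite coinfinite computable pieces, applying your coinfinite case to get $P(A\within C_1)=P(A\within C_2)=r$, observing via the chain rule $(A\within C)\within(C_i\within C)=A\within C_i$ that the analogous $\sigma'$ built from the computable partition $C_1\within C\sqcup C_2\within C=\omega$ sends $A\within C$ to $(A\within C_1)\oplus(A\within C_2)$, and then using the reverse direction of Theorem~\ref{join}.
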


\begin{proof}
Under the map which takes $c_n$ to $n$, $A\cap C$ is mapped to $A\within C$. However unless $C$ is $\omega$, this is not a permutation. Using Lemma \ref{permutationlemma}, we are able to massage this map into a permutation which takes $c_n$ to $n$ modulo a set of density $0$. Then under this permutation, $A\cap C$ (and $A$) goes to $A\within C$ modulo a set of density $0$. Therefore if $A\within C$ did not have intrinsic density $r$, $A$ could not either by Lemma \ref{densityzero}. 
\\
\\
Formally, assume $P(A\within C)\neq r$. Suppose $\pi$ is a computable permutation with $\overline{\rho}(\pi(A\within C))>r$. Let $f:C\to\omega$ be defined via $f(c_n)=n$. Then $f(A\cap C)=A\within C$:

\begin{center}
\begin{tikzcd}
A\cap C \arrow [r, "f"] & A\within C \arrow [r,"\pi"] & \pi(A\within C)
\end{tikzcd}
\end{center}

By Lemma \ref{permutationlemma}, there is $H\subseteq C$ computable with $\overline{\rho}(\pi(f(H)))=0$. Define $\pi_f:\omega\to\omega$ via $\pi_f(n)=f(n)$ for $n\in C\setminus H$, and for $n\in \overline{C}\sqcup H$ define $\pi_f(n)$ to be the least element of $f(H)$ not equal to $\pi_f(j)$ for some $j<n$. As $f$ agrees with $\pi_f$ on $C\setminus H$,
\[\pi_f((A\cap C)\setminus H)=f(A\cap C)\setminus f(H)=(A\within C)\setminus f(H)\]
Therefore by applying $\pi$,
\[\pi(\pi_f((A\cap C)\setminus H))=\pi((A\within C)\setminus f(H))=\pi(A\within C)\setminus \pi(f(H))\]
Using the above equality,
\[\overline{\rho}(\pi(\pi_f((A\cap C)\setminus H)))=\overline{\rho}(\pi(A\within C)\setminus\pi(f(H)))\]
As $\overline{\rho}(\pi(f(H)))=0$, we can apply Lemma \ref{densityzero} and see
\[\overline{\rho}(\pi(A\within C)\setminus\pi(f(H)))=\overline{\rho}(\pi(A\within C))\]
As $(A\cap C)\setminus H\subseteq A$,
\[\overline{\rho}(\pi(\pi_f(A)))\geq\overline{\rho}(\pi(\pi_f((A\cap C)\setminus H)))=\overline{\rho}(\pi(A\within C))\]
However, we assumed that $\overline{\rho}(\pi(A\within C))>r$, so $\overline{\rho}(\pi(\pi_f(A)))>r$. As $\pi\circ\pi_f$ is a computable permutation, this implies $P(A)\neq r$.
\\
\\
This proves that if $\pi$ is a computable permutation with $\overline{\rho}(\pi(A\within C))>r$, then $P(A)\neq r$. If there is no such permutation, there must be a computable permutation $\pi$ with $\underline{\rho}(\pi(A\within C))<r$ because we assumed that $P(A\within C)\neq r$. Then because 
\[(\pi(A\within C))\sqcup(\pi(\overline{A}\within C))=\pi((A\within C)\sqcup(\overline{A}\within C))=\pi(\omega)=\omega\]
we have $\rho_n(\pi(\overline{A}\within C))=1-\rho_n(\pi(A\within C))$ for all $n$. Therefore by the subtraction properties of the limit superior,
\[\overline{\rho}(\pi(\overline{A}\within C))\geq1-\underline{\rho}(\pi(A\within C))\]
As we assumed $\underline{\rho}(\pi(A\within C))<r$,
\[1-\underline{\rho}(\pi(A\within C))>1-r\]
Thus $\overline{\rho}(\pi(\overline{A}\within C))>1-r$. We now apply the previous case to get that $P(\overline{A})\neq1-r$, which automatically implies $P(A)\neq r$.
\end{proof}

If we are successful in building a set of intrinsic density $r$ which cannot compute an $r$-random set, then we will be able to use this theorem to automatically get many more such examples.
\\
\\
We now turn our attention {\tt into}, which is more useful. We first make a crucial observation about the asymptotic density of $B\into A$, which will be critical for investigating the intrinsic density of sets obtained via use of the {\tt into} operation.

\begin{lemma}
\label{densitylimit}
\mbox{}
\begin{itemize}
    \item $\overline{\rho}(B\into A)\leq \overline{\rho}(B)\overline{\rho}(A)$.
    \item $\underline{\rho}(B\into A)\geq \underline{\rho}(B)\underline{\rho}(A)$.
\end{itemize}
\end{lemma}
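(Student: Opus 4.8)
The plan is to reduce both inequalities to Lemma \ref{technicallimit} via a factorization of the principal function of $B\into A$. Write $A=\{a_0<a_1<a_2<\dots\}$ and $B=\{b_0<b_1<b_2<\dots\}$ (both infinite, else $B\into A$ is finite and every quantity in sight is $0$). By Definition \ref{intowithin} the $n$-th element of $B\into A$ is exactly $a_{b_n}$, so Lemma \ref{technicallimit} gives
\[\overline{\rho}(B\into A)=\limsup_{n\to\infty}\frac{n}{a_{b_n}},\qquad \underline{\rho}(B\into A)=\liminf_{n\to\infty}\frac{n}{a_{b_n}}.\]
The crucial observation is the identity
\[\frac{n}{a_{b_n}}=\frac{n}{b_n}\cdot\frac{b_n}{a_{b_n}},\]
in which both factors lie in $[0,1]$, since $b_n\geq n$ and $a_{b_n}\geq b_n$ for every $n$.

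Next I would identify the $\limsup$ and $\liminf$ of each factor. Applying Lemma \ref{technicallimit} to $B$, the sequence $\bigl(\tfrac{n}{b_n}\bigr)_n$ has limit superior $\overline{\rho}(B)$ and limit inferior $\underline{\rho}(B)$. For the second factor, note that $\bigl(\tfrac{b_n}{a_{b_n}}\bigr)_n$ is the subsequence of $\bigl(\tfrac{m}{a_m}\bigr)_m$ obtained by restricting to the indices $m=b_n$, which form a strictly increasing sequence tending to infinity. Passing to a subsequence can only lower a limit superior and raise a limit inferior, so $\limsup_n \tfrac{b_n}{a_{b_n}}\leq\limsup_m\tfrac{m}{a_m}=\overline{\rho}(A)$ and $\liminf_n \tfrac{b_n}{a_{b_n}}\geq\liminf_m\tfrac{m}{a_m}=\underline{\rho}(A)$, again using Lemma \ref{technicallimit}.

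Finally I would combine these with the elementary inequalities $\limsup(x_ny_n)\leq(\limsup x_n)(\limsup y_n)$ and $\liminf(x_ny_n)\geq(\liminf x_n)(\liminf y_n)$, valid for nonnegative sequences (the first also needing boundedness, which holds since every factor is in $[0,1]$). Taking $x_n=\tfrac{n}{b_n}$ and $y_n=\tfrac{b_n}{a_{b_n}}$, the first inequality yields $\overline{\rho}(B\into A)\leq\overline{\rho}(B)\,\overline{\rho}(A)$ and the second yields $\underline{\rho}(B\into A)\geq\underline{\rho}(B)\,\underline{\rho}(A)$. There is no substantial obstacle; the only points needing a little care are the justification of the product inequalities for $\limsup$/$\liminf$ and the direction of the subsequence comparison, both of which are routine given that all the sequences involved are bounded in $[0,1]$.
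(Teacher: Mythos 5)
Your proof is correct and is essentially the paper's own argument: both start from Lemma \ref{technicallimit}, factor $\frac{n}{a_{b_n}}=\frac{n}{b_n}\cdot\frac{b_n}{a_{b_n}}$, identify the second factor as a subsequence of $\frac{m}{a_m}$, and finish with sub-/supermultiplicativity of $\limsup$/$\liminf$. The only cosmetic difference is that you spell out the boundedness in $[0,1]$ and the subsequence monotonicity explicitly, where the paper leaves those routine points implicit.
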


\begin{proof}
By Lemma \ref{technicallimit},
\[\overline{\rho}(B\into A)=\limsup_{n\to\infty}\frac{n}{a_{b_n}}=\limsup_{n\to\infty}\frac{n}{a_{b_n}}\cdot 1=\limsup_{n\to\infty}\frac{n}{a_{b_n}}\cdot\frac{b_n}{b_n}\]
By the submultiplicativity of the limit superior, 
\[\overline{\rho}(B\into A)\leq (\limsup_{n\to\infty}\frac{b_n}{a_{b_n}})(\limsup_{n\to\infty}\frac{n}{b_n})=(\limsup_{n\to\infty}\frac{b_n}{a_{b_n}})\overline{\rho}(B)\]
Now $\{\frac{b_n}{a_{b_n}}\}_{n\in\omega}$ is a subsequence of $\{\frac{n}{a_n}\}_{n\in\omega}$, so
\[\limsup_{n\to\infty}\frac{b_n}{a_{b_n}}\leq\limsup_{n\to\infty}\frac{n}{a_n}=\overline{\rho}(A)\]
Therefore $\overline{\rho}(B\into A)\leq \overline{\rho}(B)\overline{\rho}(A)$ as desired.
\\
\\
The case for the limit inferior is nearly identical, reversing $\leq$ to $\geq$ and using supermultiplicativity along with the corresponding identity from Lemma \ref{technicallimit}.
\end{proof}

\begin{corollary}
If $\rho(A)=\alpha$ and $\rho(B)=\beta$, then $\rho(B\into A)=\alpha\beta$.
\end{corollary}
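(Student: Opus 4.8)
This is an immediate consequence of Lemma \ref{densitylimit}, so the plan is simply to chain the two inequalities together with the trivial bound $\underline{\rho}(X)\le\overline{\rho}(X)$. First I would observe that the hypotheses $\rho(A)=\alpha$ and $\rho(B)=\beta$ mean precisely that $\overline{\rho}(A)=\underline{\rho}(A)=\alpha$ and $\overline{\rho}(B)=\underline{\rho}(B)=\beta$. Substituting these equalities into Lemma \ref{densitylimit} yields $\overline{\rho}(B\into A)\le\overline{\rho}(B)\,\overline{\rho}(A)=\alpha\beta$ on the one hand, and $\underline{\rho}(B\into A)\ge\underline{\rho}(B)\,\underline{\rho}(A)=\alpha\beta$ on the other.

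Now I would combine these with the always-true inequality $\underline{\rho}(B\into A)\le\overline{\rho}(B\into A)$ to get the chain
\[
\alpha\beta\le\underline{\rho}(B\into A)\le\overline{\rho}(B\into A)\le\alpha\beta,
\]
which forces $\underline{\rho}(B\into A)=\overline{\rho}(B\into A)=\alpha\beta$. By the definition of asymptotic density, this says exactly that $\rho(B\into A)$ exists and equals $\alpha\beta$, as claimed.

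There is no real obstacle here: the corollary is purely a matter of unwinding definitions and applying the lemma, and the only ``work'' was already done in the proof of Lemma \ref{densitylimit} (the sub/supermultiplicativity of $\limsup$/$\liminf$ and the subsequence argument). If I wanted to be maximally terse, the entire proof could be stated in a single sentence, but I would probably spell out the three-term inequality chain for clarity.
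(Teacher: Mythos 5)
Your proof is correct and is exactly the intended derivation: the paper states this as an immediate corollary of Lemma \ref{densitylimit} without giving a separate proof, and your three-term inequality chain is precisely the unwinding the paper leaves to the reader.
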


Therefore, if $B\into A$ has intrinsic density, its intrinsic density must be the product of the densities of $A$ and $B$. This will occur in certain instances, which is our second main theorem. Recall that a set $X$ has $Y$-intrinsic density, or intrinsic density relative to $Y$, if its density is invariant under all $Y$-computable permutations as opposed to just the computable ones. We use $P_Y(X)$ to denote the $Y$-intrinsic density of $X$ if it exists.

\begin{theorem}
\label{core}
If $P(A)=\alpha$ and $P_A(B)=\beta$, then $P(B\into A)=\alpha\beta$.
\end{theorem}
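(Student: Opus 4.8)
The plan is to mimic the proof of Theorem \ref{theoremwithin}, but now the "straightening" map must be the principal function $p_A$ rather than a computable collapse, so computability of the relevant permutations is only available relative to $A$. Concretely: given a computable permutation $\pi$ and the target $\pi(B\into A)$, I want to produce an $A$-computable permutation $\sigma$ so that $\sigma(B)$ equals $\pi(B\into A)$ modulo a set of density zero. Since $B$ has $A$-intrinsic density $\beta$, this forces $\rho(\pi(B\into A)) = \beta$ for every computable $\pi$; combined with the corollary to Lemma \ref{densitylimit} (that if $B\into A$ has any intrinsic density it must be $\alpha\beta$, using $\rho(A)=\alpha$ which follows from $P(A)=\alpha$) — wait, more carefully: I do not yet know $B\into A$ has a density at all, so I should argue directly that $\overline\rho(\pi(B\into A)) \le r'$ and $\underline\rho(\pi(B\into A)) \ge r'$ can't fail, using the density-zero trick plus $P_A(B)=\beta$. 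Actually the cleanest route: show that for every computable permutation $\pi$, $\rho(\pi(B\into A))$ exists and equals $\alpha\beta$, which gives $P(B\into A) = \alpha\beta$ directly.

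Here are the steps in order. First, I would set up the straightening map: $p_A \colon \omega \to A$ with $p_A(n) = a_n$ is $A$-computable, and by definition $p_A(B) = B\into A$. But $p_A$ is a bijection onto $A$, not a permutation of $\omega$, so I need to repair it on $\overline{A}$. Second, I would invoke Lemma \ref{permutationlemma} relativized to $A$ — applied to the $A$-computable injection $\pi \circ p_A$ and the $A$-computable infinite set $A$ — to get an $A$-computable $H \subseteq A$ with $\overline\rho(\pi(p_A(H))) = 0$ (note $p_A(H) = H\into A$). Third, I would define an $A$-computable permutation $\sigma$ agreeing with $p_A$ on $B' := $ (really on $A \setminus H$, restricted to which $p_A$ is a bijection onto $A\setminus H$... actually I want $\sigma(n) = p_A(n)$ for $n \in A \setminus H$, hmm, but the domain of $p_A$ is all of $\omega$, not $A$) — let me instead define $\sigma$ to agree with $p_A$ on $\omega \setminus p_A^{-1}(H)$ and to shuffle $p_A^{-1}(H)$ bijectively onto $\overline{A} \cup H$; this is $A$-computable since $H$ and $A$ are. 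Fourth, as in Theorem \ref{theoremwithin}, I chase the diagram: $\sigma(B \setminus p_A^{-1}(H)) = (B\into A) \setminus (H\into A)$, hence $\pi(\sigma(B))$ agrees with $\pi(B\into A)$ up to the density-zero set $\pi(H\into A)$, so by Lemma \ref{densityzero} the upper and lower densities of $\pi(\sigma(B))$ equal those of $\pi(B\into A)$. Fifth, since $\sigma$ is an $A$-computable permutation and $P_A(B) = \beta$, we get $\rho(\sigma(B)) = \beta$; but $\sigma(B) \subseteq A$ and in fact $\sigma(B) = (B\into A) \setminus (H\into A)$, which has the same density as $B\into A$. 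Wait — that shows $\rho(B \into A)$ itself exists and equals $\beta$? No: $\rho(\sigma(B)) = \beta$ would say $B\into A$ has density $\beta$, but we expect $\alpha\beta$. The resolution: $P_A(B) = \beta$ means the density is invariant under $A$-computable permutations, but $\sigma(B)$ sits inside $A$, so its density gets scaled by the density of $A$ — I must be careful that $P_A(B)=\beta$ refers to density computed in $\omega$, and $\sigma$ maps onto $\omega$, so $\rho(\sigma(B))$ genuinely should be... this is exactly the subtlety.

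Let me reorganize the final steps to handle the scaling correctly, and this is where the main obstacle lies. The issue is that $p_A^{-1}$ restricted to $A$ is the collapse, so $\sigma$ roughly sends $\omega$ "sparsely" — $\sigma(B)$ lands in $A$, which has density $\alpha$, so $\rho_n(\sigma(B))$ relates to $\rho_m(B)$ where $m/n \to \alpha$. Rather than trying to force $\rho(\sigma(B)) = \beta$ by the wrong route, I would instead argue: by $P_A(B) = \beta$ we know $\rho(B) = \beta$ (taking the identity permutation), and then apply the Corollary to Lemma \ref{densitylimit} to $B\into A$ — but that needs $\rho(A) = \alpha$, which holds since $P(A) = \alpha \Rightarrow \rho(A)=\alpha$ — to conclude $\rho(B\into A) = \alpha\beta$. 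This handles the base case $\pi = \mathrm{id}$. For a general computable permutation $\pi$, I need $\rho(\pi(B\into A)) = \alpha\beta$ as well. Here I would write $\pi(B\into A) = \pi(p_A(B))$ and note $p_A^{-1}\circ \pi^{-1}$ is not quite what I want; instead, observe $\pi \circ p_A$ is an $A$-computable injection whose range is $\pi(A)$, and $P(A) = \alpha$ gives $\rho(\pi(A)) = \alpha$. Factor: $\pi(B\into A) = B \into' \pi(A)$ in the sense of "the $B$-indexed elements of $\pi(A)$" — but that's only true if $\pi$ is order-preserving on $A$, which it need not be. So the honest move is the permutation-chasing: build $\sigma$ as above, deduce (by $P_A(B)=\beta$, identity-type argument applied to $\sigma$... no). The clean statement I actually can extract is: $\sigma$ is an $A$-computable permutation, so $\rho(\sigma(B)) = \rho(\mathrm{id}(B)) = \rho(B) = \beta$ IF $B$ has $A$-intrinsic density — but $\sigma(B) = (B\into A)\setminus(H\into A)$ forces $\rho(B\into A) = \beta$, contradicting the corollary unless $\alpha = 1$. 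Therefore my definition of $\sigma$ must be wrong: $\sigma$ cannot be a permutation that agrees with $p_A$ on a density-one set, precisely because $p_A$'s range $A$ has density $\alpha < 1$. The correct fix — and the real content of the proof — is that we cannot straighten $B\into A$ back to $B$; instead we must show $\pi(B\into A)$ and the image of $B$ under a suitable $A$-computable permutation agree up to density zero only after the natural $\alpha$-rescaling is absorbed, which I expect is done by composing with a fixed $A$-computable permutation that spreads $A$ out to density one on a factorial-type sparse complement — i.e., mirror the trick from Theorem \ref{join} / Lemma \ref{densityzero} where the complement is the factorials. So: take $\pi$ computable; use Lemma \ref{permutationlemma} (relative to $A$) to find $H \subseteq \overline{A}$... and arrange an $A$-computable permutation $\sigma$ with $\sigma \restriction A$ order-preserving onto $\omega \setminus (\text{sparse set})$ and $\sigma(\overline A)$ = that sparse set; then $\sigma(\pi(B\into A))$... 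This bookkeeping — getting an honest $A$-computable permutation that exhibits $\pi(B\into A)$ as $B$-up-to-density-zero while respecting that $P_A(B)$ is about densities in $\omega$ — is the main obstacle, and I expect the write-up to route it through exactly the kind of explicit $\pi_f, \pi_d$ construction already used in Sublemma \ref{joinlemma}, relativized to $A$.
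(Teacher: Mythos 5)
You correctly locate the obstacle: the naive straightening $\sigma \approx p_A$ lands $B$ inside $A$, so density gets scaled by $\alpha$, and you cannot directly compare $\rho(\sigma(B))$ to $\beta$. But the proposal stops at diagnosing the problem; it does not supply the mechanism that resolves it, and along the way it contains a misdiagnosis worth flagging.

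First, the ``contradiction'' you derived is phantom, and your conclusion from it is wrong. You can perfectly well build an $A$-computable permutation $\sigma$ that agrees with $p_A$ on a density-one set $\omega\setminus p_A^{-1}(H)$ and shuffles $p_A^{-1}(H)$ onto $\overline{A}\cup H$. The flaw is your claim that $\sigma(B)$ then equals $B\into A$ modulo a density-zero set. It does not: the shuffle region $p_A^{-1}(H)$ is sparse in the \emph{domain}, but its image $\overline{A}\cup H$ has density $1-\alpha>0$ in the \emph{range}, and the piece $\sigma(B\cap p_A^{-1}(H))$ living there can contribute arbitrarily much to the density of $\sigma(B)$. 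So Lemma \ref{densityzero} does not apply, $\rho(\sigma(B))$ is not forced to equal $\rho(B\into A)$, and no contradiction with the corollary to Lemma \ref{densitylimit} arises. The conclusion you drew --- ``$\sigma$ cannot be a permutation agreeing with $p_A$ on a density-one set'' --- is not the lesson; the lesson is that agreeing up to a sparse domain set does not give agreement of images up to a sparse range set once $p_A$ has non-dense range.

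Second, and this is the real gap: the paper never tries to exhibit $\pi(B\into A)$ as $\sigma(B)$-up-to-density-zero for an $A$-computable $\sigma$, precisely because of the above. Instead the correct target is $\pi(B\into A)\within\pi(A)$, i.e.\ the image of $\pi(B\into A)$ under the collapse $g:\pi(A)\to\omega$, $g(p_n)=n$. The crucial step you are missing is the use of Lemma \ref{densitylimit} \emph{in the rescaled direction}: since $(\pi(B\into A)\within\pi(A))\into\pi(A)=\pi(B\into A)$ and $\overline{\rho}(\pi(A))=\alpha$, the bound $\overline{\rho}(\pi(B\into A))>\alpha\beta$ forces $\overline{\rho}(\pi(B\into A)\within\pi(A))>\beta$. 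That converts a failure at level $\alpha\beta$ into a failure at level $\beta$, which is the scale at which $P_A(B)=\beta$ lives. One then builds \emph{two} $A$-computable permutations $\pi_f$ (repairing $f:A\to\omega$, $f(a_n)=n$) and $\pi_g$ (repairing $g$), via Lemma \ref{permutationlemma} relativized to $A$ with $H\subseteq A$ chosen so that $\overline{\rho}(g(\pi(H)))=0$, and chases the square so that the $A$-computable permutation $\pi_g\circ\pi\circ\pi_f^{-1}$ sends $B$ to $\pi(B\into A)\within\pi(A)$ modulo $g(\pi(H))$, which does have density zero. That yields $P_A(B)\neq\beta$. You anticipate that a $\pi_f$-style construction is used, but without the collapse $g$ and the Lemma \ref{densitylimit} rescaling, the argument does not close.
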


\begin{proof}
The proof is similar to the proof of Theorem \ref{theoremwithin}, however we shall present it fully here without referring to techniques from that proof, as it is quite technical. The idea is that for any fixed computable permutation $\pi$, there is an $A$-computable permutation which sends $B$ to $\pi(B\into A)\within\pi(A)$  modulo a set of density $0$. Therefore if $\pi$ witnesses that $B\into A$ does not have intrinsic density $\alpha\beta$, i.e.\ $\pi(B\into A)$ does not have density $\alpha\beta$, and then as $A$ has intrinsic density $\alpha$ by assumption, Lemma \ref{densitylimit} will show that $\pi(B\into A)\within\pi(A)$ does not have density $\beta$. Thus $B$ does not have $A$-intrinsic density $\beta$.
\\
\\
Formally, assume $P(A)=\alpha$. Assume that $P(B\into A)\neq \alpha\beta$. We shall show that $P_A(B)\neq \beta$. First suppose that there is some computable permutation $\pi$ such that $\overline{\rho}(\pi(B\into A))>\alpha\beta$. We shall let $\pi(A)=\{p_0<p_1<p_2<\dots\}$. Let $f:A\to\omega$ be defined via $f(a_n)=n$ and $g:\pi(A)\to\omega$ via $g(p_n)=n$,  $f$ maps $A$ to its indices and $g$ maps $\pi(A)$ to its indices. Then $f(B\into A)=B$ and $g(\pi(B\into A))=\pi(B\into A)\within\pi(A)$:

\begin{center}
\begin{tikzcd}
B\into A \arrow[r, "\pi"] \arrow [d, "f"] & \pi(B\into A) \arrow[d, "g"] \\
B & \pi(B\into A)\within\pi(A)
\end{tikzcd}
\end{center}

Note by Lemma \ref{densitylimit} that $\overline{\rho}(\pi(B\into A)\within\pi(A))>\beta$: From the definition,
\[(\pi(B\into A)\within\pi(A))\into \pi(A))=\pi(B\into A)\]
and $\overline{\rho}(B\into A)>\alpha\beta$ by assumption. $\overline{\rho}(\pi(A))=\alpha$ because $P(A)=\alpha$, so $\overline{\rho}(\pi(B\into A)\within\pi(A))\leq\beta$ would contradict Lemma \ref{densitylimit}.
\\
\\
From this point forward we shall let
\[X=\pi(B\into A)\within\pi(A)\]
for the sake of readability.
\\
\\
By Lemma \ref{permutationlemma} relativized to $A$ and applied to $g\circ\pi$, there is an $A$-computable set $H\subseteq A$ such that:
\[\overline{\rho}(g(\pi(H)))=0\]

We shall now define permutations which preserve the properties of $f$ and $g$ outside of $H$. Define $\pi_f:\omega\to\omega$ via $\pi_f(k)=f(k)$ for $k\in A\setminus H$, and for $k\in \overline{A}\sqcup H$, let $\pi_f(k)$ be the least element of $f(H)$ not equal to $\pi_f(m)$ for some $m<k$. Define $\pi_g:\omega\to\omega$ similarly using $\pi(A)$, $\pi(H)$, and $g(\pi(H))$ in place of $A$, $H$, and $f(H)$ respectively. Then $\pi_f$ and $\pi_g$ are $A$-computable because $H$, $f$, and $g$ are, and it is a permutation because $f$ and $g$ are bijections (from $A$ and $\pi(A)$ to $\omega$ respectively) which have been modified to be total without violating injectivity or surjectivity.
\\
\\
Now we shall compute $\pi_g(\pi(\pi_f^{-1}(B\setminus f(H))))$. As $f(B\into A)=B$ and $f$ agrees with $\pi_f$ on $\overline{H}$,
\[\pi_f^{-1}(B\setminus f(H))=(B\into A)\setminus H\]
Furthermore
\[\pi((B\into A)\setminus H)=\pi(B\into A)\setminus\pi(H)\]
As $g(\pi(B\into A))=X$ and $\pi_g$ agrees with $g$ on $\overline{\pi(H)}$, 
\[\pi_g(\pi(B\into A)\setminus\pi(H))=g(\pi(B\into A))\setminus g(\pi(H))=X\setminus g(\pi(H))\]
Thus $\pi_g(\pi(\pi_f^{-1}(B\setminus f(H))))=X\setminus g(\pi(H))$. As $\overline{\rho}(g(\pi(H))=0$, Lemma \ref{densityzero} shows
\[\overline{\rho}(X\setminus g(\pi(H)))=\overline{\rho}(X)\]
By the definition of $X$,
\[\overline{\rho}(X)=\overline{\rho}(\pi(B\into A)\within\pi(A))\]
which is greater than $\beta$ by the above. As $B\setminus f(H)\subseteq B$, 
\[\pi_g(\pi(\pi_f^{-1}(B\setminus f(H))))\subseteq\pi_g(\pi(\pi_f^{-1}(B)))\]
and thus
\[\overline{\rho}(\pi_g(\pi(\pi_f^{-1}(B))))\geq \overline{\rho}(\pi_g(\pi(\pi_f^{-1}(B\setminus f(H)))))\]
Therefore
\[\overline{\rho}(\pi_g(\pi(\pi_f^{-1}(B))))\geq \overline{\rho}(\pi(B\into A)\within\pi(A))>\beta\]
As $\pi_g\circ\pi\circ\pi_f^{-1}$ is an $A$-computable permutation, $P_A(B)\neq\beta$.
\\
\\
Therefore we have proved that if there is some computable permutation $\pi$ such that $\overline{\rho}(\pi(B\into A))>\alpha\beta$, then $P_A(B)\neq \beta$. If there is no such permutation, then there must be a computable permutation $\pi$ such that $\underline{\rho}(\pi(B\into A))<\alpha\beta$ because we assumed $P(B\into A)\neq\alpha\beta$. As $A=(B\into A)\sqcup(\overline{B}\into A)$, $\pi(A)=\pi(B\into A)\sqcup\pi(\overline{B}\into A)$. Therefore
\[\overline{\rho}(\pi(\overline{B}\into A))=\overline{\rho}(\pi(A)\setminus\pi(B\into A))\]
The fact that $\rho_n(\pi(A))=\rho_n(\pi(B\into A))+\rho_n(\pi(A)\setminus\pi(B\into A))$ combined with the properties of the limit superior with regards to subtraction implies
\[\overline{\rho}(\pi(A)\setminus\pi(B\into A))\geq\overline{\rho}(\pi(A))-\underline{\rho}(\pi(B\into A))\]
We know that $\overline{\rho}(\pi(A))=\alpha$ because $P(A)=\alpha$. As we assumed that $\underline{\rho}(\pi(B\into A))<\alpha\beta$,
\[\overline{\rho}(\pi(\alpha))-\underline{\rho}(\pi(B\into A))>\alpha-\alpha\beta=\alpha(1-\beta)\]
Bringing this together,
\[\overline{\rho}(\pi(\overline{B}\into A))>\alpha(1-\beta)\]
Thus we can apply the first case of the proof to show that $P_A(\overline{B})\neq 1-\beta$, which automatically implies $P_A(B)\neq\beta$, so we are done.
\end{proof}

\begin{corollary}
If $P(A)=\alpha$ and $P_A(B)=\beta$, then $P(A\cap B)=\alpha\beta$.\footnote{Astor \cite{intrinsicdensity} proved this for the special case when $A$ has intrinsic density $\alpha$ and $B$ is 1-random relative to $A$.}
\end{corollary}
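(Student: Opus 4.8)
The plan is to reduce the corollary to Theorem \ref{core} by expressing $A\cap B$ as an application of {\tt into}. Recall from Definition \ref{intowithin} that $B\within A$ is by construction the unique set satisfying $(B\within A)\into A=A\cap B$. So if I can establish that $P_A(B\within A)=\beta$, then Theorem \ref{core} applied with $B\within A$ in the role of $B$ immediately gives $P\big((B\within A)\into A\big)=\alpha\beta$, which is to say $P(A\cap B)=\alpha\beta$, the desired conclusion.

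So the one thing left to verify is that $P_A(B\within A)=\beta$, and this is exactly Theorem \ref{theoremwithin} relativized to the oracle $A$. That theorem states that whenever $C$ is computable and $P(A')=r$, one has $P(A'\within C)=r$; its proof relativizes verbatim, since Lemma \ref{permutationlemma} still holds when the $f_i$ and $C$ are taken to be $A$-computable (the set $H$ it produces is then $A$-computable), and Lemma \ref{densityzero} is a purely density-theoretic statement with no computability content. Applying the relativized version with $B$ in the role of $A'$ (it has $A$-intrinsic density $\beta$ by hypothesis) and the $A$-computable set $A$ itself in the role of $C$ yields $P_A(B\within A)=\beta$.

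Combining these two observations finishes the proof. I do not expect a genuine obstacle here: the only points needing a moment's attention are that $A$ is (trivially) $A$-computable, so it is a legitimate choice for $C$ in the relativized Theorem \ref{theoremwithin}, and that the relativization of that theorem introduces no new difficulty beyond carrying an oracle $A$ through the argument. Everything else is a direct citation of Theorem \ref{core} together with the defining property of $\within$ recorded in Definition \ref{intowithin}.
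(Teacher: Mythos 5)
Your proof is correct and takes exactly the paper's route: rewrite $A\cap B=(B\within A)\into A$, relativize Theorem \ref{theoremwithin} to $A$ (with $C=A$, which is trivially $A$-computable) to get $P_A(B\within A)=\beta$, and then invoke Theorem \ref{core}. The extra sentence justifying why the relativization goes through (Lemma \ref{permutationlemma} relativizes, Lemma \ref{densityzero} has no computability content) is a nice touch the paper leaves implicit.
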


\begin{proof}
By definition,
\[A\cap B=(B\within A)\into A\]
As $P_A(B)=\beta$, Theorem \ref{theoremwithin} relativized to $A$ shows that $P_A(B\within A)=\beta$. Therefore we can apply Theorem \ref{core} to $A$ and $B\within A$ to get that 
\[P((B\within A)\into A)=P(A\cap B)=\alpha\beta\]
\end{proof}

It is natural to ask if any of the intrinsic density requirements in Theorem \ref{core} can be dropped. It is immediate that we cannot drop the requirement that $A$ has intrinsic density: $P_A(\omega)=1$ for any $A$, so $\omega$ always satisfies the requirements on $B$, but $\omega\into A=A$, so $A$ must have intrinsic density. Similarly, $B\into\omega=B$ for any $B$, so $B$ must have intrinsic density. Therefore the only possible weakening of Theorem \ref{core} would be to require $P(B)=\beta$ as opposed to $P_A(B)=\beta$. However, this fails.

\begin{proposition}
\label{weakening}
Let $P(A)=\frac{1}{2}$. Then $P(A\oplus\overline{A})=\frac{1}{2}$ but $A\into(A\oplus\overline{A})$ does not have intrinsic density.
\end{proposition}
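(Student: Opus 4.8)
The plan is to treat the two assertions separately. For $P(A\oplus\overline{A})=\frac{1}{2}$: since $P(A)=\frac{1}{2}$, for every computable permutation $\pi$ we have $\rho_n(\pi(\overline{A}))=1-\rho_n(\pi(A))$ for all $n$, hence $\rho(\pi(\overline{A}))=\frac{1}{2}$, so $P(\overline{A})=\frac{1}{2}$. Then Theorem \ref{join} applies directly to give $P(A\oplus\overline{A})=\frac{1}{2}$.

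For the second assertion, the first step is to identify $A\into(A\oplus\overline{A})$ concretely. Writing $C=A\oplus\overline{A}$, note that for each $n$ exactly one of $2n,2n+1$ lies in $C$ — namely $2n$ if $n\in A$ and $2n+1$ if $n\notin A$ — and since these pairs occur in increasing order, the $n$-th element of $C$ is $c_n=2n$ whenever $n\in A$. Therefore $A\into C=\{c_{a_i}:i\in\omega\}=\{2a_i:i\in\omega\}=A\into E$, where $E$ is the set of even numbers. Since $\rho(A)=\frac{1}{2}$ (apply the hypothesis to the identity permutation) and $\rho(E)=\frac{1}{2}$, the corollary to Lemma \ref{densitylimit} gives $\rho(A\into(A\oplus\overline{A}))=\frac{1}{4}$; so if this set had an intrinsic density, that density would necessarily be $\frac{1}{4}$.

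The final step is to exhibit one computable permutation that changes the density. Fix an infinite, co-infinite computable set $F$ of density $0$ (for instance the powers of $2$), put $D=\overline{F}$, and let $\pi$ be the computable permutation defined by $\pi(2n)=d_n$ and $\pi(2n+1)=f_n$, i.e. $\pi$ maps the evens order-isomorphically onto $D$ and the odds order-isomorphically onto $F$. Then $\pi(A\into E)=\{d_{a_i}:i\in\omega\}=A\into D$, and since $\rho(D)=1$ and $\rho(A)=\frac{1}{2}$, Lemma \ref{densitylimit} (upper and lower bounds together) forces $\rho(A\into D)=\frac{1}{2}\neq\frac{1}{4}$. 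Hence $A\into(A\oplus\overline{A})$ is not invariant under computable permutations and so has no intrinsic density.

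There is no deep obstacle here; the only point requiring care is the bookkeeping in the middle step — correctly computing $c_n$ for $C=A\oplus\overline{A}$ and thereby recognizing $A\into(A\oplus\overline{A})$ as the set $A\into E$ of doubled elements of $A$ — after which both density computations are immediate applications of Lemma \ref{densitylimit}.
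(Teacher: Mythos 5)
Your proof is correct. The first half (that $P(A\oplus\overline{A})=\tfrac{1}{2}$) is exactly the paper's argument. For the second half you make the same key observation as the paper --- that the $n$-th element of $A\oplus\overline{A}$ is $2n$ when $n\in A$ --- but you finish differently. The paper recognizes $A\into(A\oplus\overline{A})=A\oplus\emptyset$ (your $\{2a_i\}$ is the same set, just written as $A\into E$) and then simply invokes Theorem \ref{join}: since $P(A)=\tfrac{1}{2}\neq 0=P(\emptyset)$, the set $A\oplus\emptyset$ has no intrinsic density. You instead construct an explicit computable permutation $\pi$ (evens to a density-one set, odds to a density-zero set) and use Lemma \ref{densitylimit} to show $\rho(\pi(A\into E))=\tfrac{1}{2}$ while $\rho(A\into E)=\tfrac{1}{4}$ under the identity. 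Both arguments are valid; yours is more hands-on and essentially reproves the relevant special case of Theorem \ref{join}, while the paper's is shorter because it leans on the already-established theorem. If brevity matters, note that once you have identified $A\into(A\oplus\overline{A})$ as $\{2n:n\in A\}=A\oplus\emptyset$, citing Theorem \ref{join} finishes in one line.
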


\begin{proof}
Note that $A\oplus\overline{A}$ has intrinsic density $\frac{1}{2}$ by Theorem \ref{join} as $P(A)=\frac{1}{2}$ implies $P(\overline{A})=\frac{1}{2}$.
\\
\\
Let $E$ represent the set of even numbers. Notice that $A\oplus \overline{A}$ contains exactly one of $2k$ or $2k+1$ for all $k\in\omega$. Therefore the $n$-th element of $A\oplus \overline{A}$ is $2n$ if $n\in A$ and $2n+1$ if $n\not\in A$. Thus
\[E\within (A\oplus\overline{A})=A\]
by definition. By the properties of the {\tt within} operation,
\[A\into(A\oplus \overline{A})=(E\within(A\oplus\overline{A}))\into (A\oplus\overline{A})=E\cap(A\oplus\overline{A})=A\oplus\emptyset\]
By Proposition \ref{join}, however, $A\oplus\emptyset$ does not have intrinsic density.
\end{proof}

\section{Intrinsic Density $r$ sets which cannot compute an $r$-random}
We are now ready to construct sets of intrinsic density $r$ which cannot compute an $r$-random set. To do this, we would like to have a countable collection of sets which all have intrinsic density relative to each other so that we may apply Theorem \ref{core} repeatedly. 

\begin{lemma}
\label{randomsequence}
Any 1-random set $X$ uniformly computes a countable, disjoint sequence of sets $\{A_i\}_{i\in\omega}$ such that $A_i$ has intrinsic density $\frac{1}{2^{i+1}}$, i.e. $P(A_i)=\frac{1}{2^{i+1}}$, for each $i$. Furthermore, the $A_i$'s form a partition of $\omega$.
\end{lemma}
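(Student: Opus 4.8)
The plan is to slice a single 1-random $X$ into countably many mutually relatively-random pieces and then fold them together using the $\into$ operation so that the densities telescope. The standard tool is the Van Lambalgen-style decomposition: partition $\omega$ into infinitely many computable pieces $\{C_i\}_{i\in\omega}$ (say, $C_i = \{n : \text{the } i\text{-th bit of the binary expansion of } n+1 \text{ is the last } 1\}$, or any other fixed computable infinite partition), and let $Y_i = X \within C_i$ be the restriction of $X$ to the $i$-th column, viewed as a subset of $\omega$ via the canonical computable bijection $C_i \cong \omega$. By Van Lambalgen's theorem, the sequence $\langle Y_0, Y_1, \dots\rangle$ has the property that each $Y_i$ is 1-random relative to $\bigoplus_{j \ne i} Y_j$; in particular $Y_i$ is 1-random relative to $Y_0 \oplus \cdots \oplus Y_{i-1}$, uniformly, and all of this is uniformly computable from $X$.

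Next I would build the $A_i$'s recursively using $\into$. The idea is that a 1-random set relative to $Z$ has $Z$-intrinsic density $\frac{1}{2}$ by Proposition~\ref{randomdensity} relativized (since $1$-randomness relative to $Z$ implies $\frac12$-$Z$-randomness, hence $Z$-intrinsic density $\frac12$), so repeated application of Theorem~\ref{core} will halve the density at each stage. Concretely, set $B_0 = Y_0$, which has intrinsic density $\frac12$. Then having defined $B_i$ with $P(B_i) = \frac{1}{2^{i+1}}$ and $B_i \leq_T Y_0 \oplus \cdots \oplus Y_i$, I would use $Y_{i+1}$, which is $1$-random relative to $Y_0 \oplus \cdots \oplus Y_i$ and hence relative to $B_i$, so $P_{B_i}(Y_{i+1}) = \frac12$; then Theorem~\ref{core} gives $P(Y_{i+1} \into B_i) = \frac{1}{2^{i+2}}$. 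The sets $A_i$ should be taken inside a fixed computable ambient copy of $\omega$ so that they can be made pairwise disjoint and exhaustive: rather than literally iterating $\into$ on $\omega$ (which would nest the $B_i$'s into shrinking subsets but not partition $\omega$), I would instead allocate $A_i$ to live on a computable "block" and arrange the blocks to tile $\omega$; alternatively, and more cleanly, define $A_i$ to be the "fresh" part of $B_i$, i.e. $A_i = B_i \setminus B_{i+1}$-type differences, after checking these have the right densities. The cleanest route is probably: put $A_0 = B_1$... — no; better to note that $(Y_{i+1}\into B_i)$ and $(\overline{Y_{i+1}}\into B_i)$ partition $B_i$, and $\overline{Y_{i+1}}$ is also $1$-random relative to $B_i$, so $\overline{Y_{i+1}}\into B_i$ has intrinsic density $\frac{1}{2^{i+2}}$ as well; iterating and peeling off one such half at each stage yields disjoint $A_i$ with $P(A_i)=\frac{1}{2^{i+1}}$ whose union is $B_0 = Y_0$, and then one adjusts by reserving the first column so the union is all of $\omega$ (e.g. work inside $\omega$ with $B_0 = \omega$ at the start is illegal since $\omega$ has density $1$; instead take $B_0$ with $P(B_0)=1$ forced trivially and split off halves — here one wants $A_0$ of density $\frac12$, so set $B_{-1} = \omega$ conceptually and $A_i = \overline{Y_{i+1}}\into(\cdots)$ peeling from $\omega$ directly via $A_i = Y_i'\into\omega$-style, noting $X\within C_i$ already lives disjointly).

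The main obstacle, and the part requiring care, is reconciling "disjoint and partitioning $\omega$" with "obtained by iterated $\into$." The $\into$ operation naturally produces nested subsets, not a partition, so I would fix this by observing that at stage $i$ the set $B_i$ splits as the disjoint union $(Y_{i+1}\into B_i) \sqcup (\overline{Y_{i+1}}\into B_i)$, and I would designate $A_i := \overline{Y_{i+1}}\into B_i$ (density $\frac{1}{2^{i+2}}$) while carrying $B_{i+1} := Y_{i+1}\into B_i$ to the next stage. Starting from $B_0 = \omega$ is not allowed; instead start from $B_0$ with $P(B_0) = 1$ and $A_0$ should have density $\tfrac12$, so I should index so that $A_i = \overline{Y_i}\into B_{i-1}$ with $B_{-1}=\omega$, giving $A_0 = \overline{Y_0}$ with $P(A_0) = \frac12$ — but we wanted $P(A_0) = \frac12$, good, except $\overline{Y_0}$ has intrinsic density $\frac12$ and $Y_0\into\omega = Y_0$, fine. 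Then $\bigsqcup_{i} A_i$ together with $\bigcap_i B_i$ exhausts $\omega$, and $\bigcap_i B_i$ has intrinsic density $0$ but may be nonempty; to get an exact partition I would absorb it into $A_0$ (which still has density $\tfrac12$ by Lemma~\ref{densityzero}) or, more carefully, argue $\bigcap_i B_i = \emptyset$ because membership of any fixed $n$ in $B_i$ fails once the randomness of $Y_i$ forces enough zeros — this last point is where I expect to spend the most effort, and if it is not literally empty, the Lemma~\ref{densityzero} absorption trick rescues the partition claim. Uniformity is automatic throughout since every step (the columns, Van Lambalgen decomposition, the $\into$ computations, the principal functions involved) is uniformly computable from $X$.
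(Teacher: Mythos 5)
Your approach --- decompose $X$ into mutually relatively random columns via Van Lambalgen's theorem, then peel off one ``half'' at each stage via $A_i = \overline{Y_i}\into B_{i-1}$ and $B_i = Y_i\into B_{i-1}$ (with the recursion grounded at $\omega$), using the relativized Proposition~\ref{randomdensity} together with Theorem~\ref{core} to track intrinsic densities --- is the paper's construction up to a reindexing of the $B$'s. The density computation, disjointness, and uniform computability of the sequence all come out as you sketch, and the various false starts (block allocations, starting from $Y_0$ rather than $\omega$) are abandoned in favor of the right recursion.

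The genuine gap is the partition claim, which you correctly flag as the hard part but do not close. Neither route you sketch is complete. The absorption fallback fails on computability grounds: $\bigcap_i B_i$ is only $\Pi^0_1(X)$ a priori, so if it were infinite then $A_0\cup\bigcap_i B_i$ need not be $X$-computable, which destroys the uniform computability conclusion of the lemma and, downstream, the search in Theorem~\ref{every} for the $i$ with $m\in A_i$; moreover, Lemma~\ref{densityzero} by itself controls only ordinary density, and upgrading to $\overline{P}(\bigcap_i B_i)=0$ requires the additional observation that this set sits inside every $B_n$, so its absolute upper density is at most $\overline{P}(B_n)\to 0$ --- a step you do not supply. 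Your other route, proving $\bigcap_i B_i=\emptyset$ outright, is what the paper does, and the argument is short: let $m$ be least with $m\notin\bigcup_i A_i$; then $m\in B_n$ for every $n$, and since every $j<m$ lies in some $A_{i(j)}$, once $n\ge k:=\max_{j<m}(i(j)+1)$ the number $m$ is the $0$-th element of $B_n$. But $m\in B_{n+1}=X^{[n]}\into B_n$ then forces $0\in X^{[n]}$ for all $n\ge k$, so $\{\langle n,0\rangle : n\ge k\}$ is an infinite computable subset of $X$, contradicting $1$-randomness. This argument is the piece missing from your write-up.
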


\begin{proof}
    Recall that given a set $X$, $X^{[i]}$ denotes the $i$-th column of $X$, i.e.\ $\{n:\langle i,n\rangle\in X\}$. Let $X\subseteq\omega$ be 1-random. Then for all $i$, $X^{[i]}$ is 1-random relative to $\bigoplus_{j\neq i} X^{[j]}$. (Essentially Van Lambalgen \cite{vanlambalgen}, Downey-Hirschfeldt \cite[Corollary 6.9.6]{downeyhirschfeldt}) Proposition \ref{randomdensity} relativizes to the fact that $Z$-1-randoms have $Z$-intrinsic density $\frac{1}{2}$. In particular, taking a single 1-random automatically gives us infinitely many mutually 1-random sets, and thus infinitely many sets with intrinsic density $\frac{1}{2}$ relative to each other. Using these together with Theorem \ref{core}, we can construct the desired sequence, where the mutual randomness ensures that the conditions of Theorem \ref{core} are met.
    \\
    \\
    Let $B_0=\omega$. Given $B_n$, let 
    \[A_{n}=\overline{X^{[n]}}\into B_n\]
    and 
    \[B_{n+1}=X^{[n]}\into B_n\]
    Note that for all $i$, $B_{i+1}\subseteq B_i$ and $A_{i}\cap B_{i+1}=\emptyset$, as $B_{i+1}=X^{[i]}\into B_i$ and $A_{i}=\overline{X^{[i]}}\into B_{i}$. Then for $i<j$, $A_i\cap A_j=\emptyset$ because $A_j\subseteq B_j\subseteq B_{i+1}$. Thus $\{A_i\}_{i\in\omega}$ is disjoint. Furthermore, the $A_i$'s and $B_i$'s are in fact uniformly computable in $X$, as $X$ can uniformly compute all of its columns and $X\into Y$ is uniformly computable in $X\oplus Y$. We now verify that $P(A_i)=\frac{1}{2^{i+1}}$ and $P(B_i)=\frac{1}{2^i}$ by induction.
    \\
    \\
    $P(B_0)=P(\omega)=1$, and $B_0$ is computable. Suppose that $B_i$ is $\bigoplus_{j<i} X^{[j]}$-computable and that $P(B_i)=\frac{1}{2^i}$. Then $B_{i+1}=X^{[i]}\into B_i$ and $A_i=\overline{X^{[i]}}\into B_i$ are both $B_i\oplus X^{[i]}$-computable, and therefore $\bigoplus_{j< i+1} X^{[j]}$-computable. By the above, both $X^{[i]}$ and $\overline{X^{[i]}}$ are 1-random relative to $B_i$ and thus have intrinsic density $\frac{1}{2}$ relative to $B_i$. Therefore $P_{B_i}(X^{[i]})=P_{B_i}(\overline{X^{[i]}})=\frac{1}{2}$ by the relativization of Proposition \ref{randomdensity}. Thus by Theorem \ref{core} and the induction hypothesis, \[P(A_i)=P(\overline{X^{[i]}}\into B_i)=P(\overline{X^{[i]}})P(B_i)=\frac{1}{2}\cdot\frac{1}{2^i}=\frac{1}{2^{i+1}}\]
    A nearly identical argument for $P(B_{i+1})$ verifies $P(B_{i+1})=\frac{1}{2^{i+1}}$, which completes the induction.
    \\
    \\
    Finally, suppose for the sake of contradiction that the $A_i$'s do not form a partition of $\omega$. Since we have already shown that the sequence is disjoint, there must exist a least $m$ with $m\not\in A_i$ for any $i$. Therefore, there must be some $k$ such that $m$ is the least element of $B_n$ for all $n\geq k$. This is because every $m$ is in $B_0$ and $B_{i+1}$ is a subset of $B_i$ missing only elements of $A_i$. It follows that $0\in X^{[n]}$ for all $n>k$, as $0\in\overline{X^{[n]}}$ would imply that $m\in A_n$ since $A_n=\overline{X^{[n]}}\into B_n$ and $m$ is the least, i.e. $0$-th, element of $B_n$. However, this means that $\{\langle n,0\rangle:n>k\}$ is an infinite computable subset of $X$, which contradicts the assumption that $X$ is 1-random since it is a basic fact that 1-random sets cannot have infinite computable subsets. Therefore, every $m$ must be in some $A_i$ as desired.
\end{proof}

Jockusch and Schupp \cite{JockuschSchupp} proved that asymptotic density enjoys a restricted form of countable additivity: if there is a countable sequence $\{S_i\}_{i\in\omega}$ of disjoint sets such that $\rho(S_i)$ exists for all $i$ and 
\[\lim_{n\to\infty} \overline{\rho}(\bigsqcup_{i>n} S_i)=0\]
then
\[\rho(\bigsqcup_{i\in\omega}S_i)=\mathop{\Sigma}_{i=0}^\infty \rho(S_i)\]
The intrinsic density analog of this results follows immediately from the fact that permutations preserve disjoint unions. That is, if there is a countable sequence $\{S_i\}_{i\in\omega}$ of disjoint sets such that $P(S_i)$ exists for all $i$ and 
\[\lim_{n\to\infty} \overline{P}(\bigsqcup_{i>n} S_i)=0\]
then
\[P(\bigsqcup_{i\in\omega}S_i)=\mathop{\Sigma}_{i=0}^\infty P(S_i)\]

Note that $\lim_{n\to\infty}\overline{P}(\bigsqcup_{i>n} A_i)=0$ must be true for any sequence satisfying Lemma \ref{randomsequence}, as $\lim_{n\to\infty} P(\bigsqcup_{i\leq n} A_i)=1$. This together with the previous lemma allows us to construct our desired set.

\begin{theorem}
\label{every}
For every $r\in(0,1)$ and any 1-random set $X$, $r\oplus X$ computes a set with intrinsic density $r$.
\end{theorem}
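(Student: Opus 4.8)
The plan is to read off $r$ in binary and build the desired set as a disjoint union of appropriately chosen members of the sequence produced by Lemma \ref{randomsequence}, then apply the countable additivity of intrinsic density noted above.

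First, fix a binary expansion of $r$ and let $I\subseteq\omega$ be the set of positions of its $1$'s, so that $r=\sum_{i\in I}2^{-(i+1)}$; since $0<r<1$ the set $I$ is nonempty, and $I$ is uniformly computable from $r$. Apply Lemma \ref{randomsequence} to $X$ to obtain a partition $\{A_i\}_{i\in\omega}$ of $\omega$ into uniformly $X$-computable sets with $P(A_i)=2^{-(i+1)}$ for every $i$, and set
\[Y=\bigsqcup_{i\in I}A_i.\]
I would then check that $Y$ is computable from $r\oplus X$: given $n$, search for the unique $i$ with $n\in A_i$ — the search halts precisely because $\{A_i\}$ is a partition, not merely a disjoint family — and then $n\in Y$ if and only if that $i$ belongs to $I$, which $r$ decides. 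This is the one place where the partition clause of Lemma \ref{randomsequence} is essential rather than a convenience.

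It remains to compute $P(Y)$. The sets $\{A_i\}_{i\in I}$ are disjoint with $P(A_i)$ defined for each $i$, so by the intrinsic density analogue of the Jockusch and Schupp additivity principle it suffices to verify the tail condition $\lim_{n\to\infty}\overline{P}\bigl(\bigsqcup_{i\in I,\,i>n}A_i\bigr)=0$. But $\bigsqcup_{i\in I,\,i>n}A_i\subseteq\bigsqcup_{i>n}A_i$, absolute upper density is monotone under inclusion, and $\lim_{n\to\infty}\overline{P}\bigl(\bigsqcup_{i>n}A_i\bigr)=0$ was observed just before the statement (it follows from $\lim_n P(\bigsqcup_{i\le n}A_i)=1$). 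Hence the tail condition holds and
\[P(Y)=\sum_{i\in I}P(A_i)=\sum_{i\in I}2^{-(i+1)}=r,\]
as required; when $I$ is finite the same conclusion is immediate from finite additivity of intrinsic density. The only step needing genuine care is the computability bookkeeping in the second paragraph, which the partition property handles; everything else is a direct application of Lemma \ref{randomsequence} and the additivity principle.
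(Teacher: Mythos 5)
Your proof is correct and follows essentially the same route as the paper's: take the partition $\{A_i\}$ from Lemma \ref{randomsequence}, form the union over the indices of the $1$'s in the binary expansion of $r$, verify $r\oplus X$-computability using the partition property, and invoke the countable additivity of intrinsic density with the tail condition checked by monotonicity. The notational choices ($I$ for $B_r$, $Y$ for $X_r$) and the remark about the finite case are the only differences.
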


\begin{proof}
Let $r\in(0,1)$. Let $B_r\subseteq \omega$ be the set whose characteristic function is identified with the binary expansion that gives $r$,  the set of all $n$ such that the $n$-th bit in the binary expansion for $r$ is 1. Let $\{A_i\}_{i\in\omega}$ be as in Lemma \ref{randomsequence} applied to $X$, i.e. a uniformly $X$-computable partition of $\omega$ with $A_i$ having intrinsic density $\frac{1}{2^{i+1}}$ for each $i$. Let $X_r=\bigsqcup_{n\in B_r} A_n$. We now describe the process by which $X_r$ is computable from $r\oplus X$. For a given $m$, $m\in A_n$ for some $n$ since the $A_i$'s form a partition of $\omega$. $X$ can uniformly compute the $A_i$'s and thus compute $n$ where $m\in A_n$. Then $m\in X_r$ if and only if $n\in B_r$, which $r$ can compute.
\\
\\
Now, note that 
\[\lim_{n\to\infty}\overline{P}(\bigsqcup_{i\in B_r, i>n} A_i)=0\]
because $\bigsqcup_{i\in B_r, i>n} A_i\subseteq \bigsqcup_{i>n} A_i$ and $\lim_{n\to\infty}\overline{P}(\bigsqcup_{i>n} A_i)=0$. By the fact that countable unions sum intrinsic densities and the definition of $X_r$,
\[P(X_r)=\mathop{\Sigma}_{n\in B_r} P(A_n)=\mathop{\Sigma}_{n\in B_r} \frac{1}{2^{n+1}}\]
By the definition of the binary expansion,
\[P(X_r)=\mathop{\Sigma}_{n\in B_r} \frac{1}{2^{n+1}}=r\]
\end{proof}

\begin{proposition}[Reimann and Slaman \cite{ReimannSlaman}]
\label{representation}
No $r$-random set $X$ can be computable from $r$.
\end{proposition}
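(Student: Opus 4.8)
The plan is to derive a contradiction from the assumption $X \le_T r$ by constructing a Martin–Löf test relative to $r$, with respect to the measure $\mu_r$, that captures $X$, thereby contradicting the $r$-randomness of $X$. Under the Reimann–Slaman framework, $r$-randomness of $X$ means that $X$ is $\mu_r$-ML-random relative to a representation of $\mu_r$; since $r$ itself computes rational approximations to arbitrary precision of $\mu_r(\sigma)=r^{a}(1-r)^{b}$ for every string $\sigma$, the real $r$ is such a representation. So it suffices to produce a uniformly $r$-computable sequence of classes $\{U_n\}_{n\in\omega}$ with $\mu_r(U_n)\le 2^{-n}$ and $X\in\bigcap_n U_n$.

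First I would record that $\mu_r$ is atomless when $0<r<1$: for any $Y$ we have $\mu_r(Y\upharpoonright m)\le\big(\max(r,1-r)\big)^{m}$, and since $\max(r,1-r)<1$ this tends to $0$ as $m\to\infty$. In particular the $\mu_r$-measures of the initial segments of $X$ shrink to $0$, and $\mu_r(\{X\})=0$.

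Next, using $X\le_T r$, the oracle $r$ can compute the string $X\upharpoonright m$ for each $m$ and then compute a rational approximation to $\mu_r(X\upharpoonright m)$ to any desired precision. Hence $r$ can, for each $n$, search for a length $\ell_n$ whose approximation certifies $\mu_r(X\upharpoonright\ell_n)<2^{-n}$; by the previous paragraph such an $\ell_n$ exists (for large $m$ the measure drops below $2^{-n-1}$, and an approximation with error below $2^{-n-2}$ then certifies the bound), so the search halts. Setting $U_n$ to be the class of reals extending $X\upharpoonright\ell_n$ gives a uniformly $r$-computable sequence with $\mu_r(U_n)<2^{-n}$, and $X$ extends $X\upharpoonright\ell_n$ for every $n$, so $X\in\bigcap_n U_n$. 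Thus $\{U_n\}$ is a $\mu_r$-ML-test relative to $r$ covering $X$, contradicting $r$-randomness of $X$.

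The only genuinely delicate point is the opening appeal to the Reimann–Slaman machinery: one must ensure that "$r$-random" is being read as ML-randomness relative to (a representation equivalent to) $r$, so that a test built from the oracle $r$ genuinely refutes it. Once that convention is fixed, the argument is the standard observation that an oracle cannot compute a point random with respect to one of its own continuous measures, and the remaining work is just the routine precision-management in approximating $\mu_r(X\upharpoonright m)$ from $r$.
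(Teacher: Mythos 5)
Your test construction is fine, and it correctly proves the following: if $X$ is Martin--L\"of random \emph{relative to $r$} for the measure $\mu_r$, then $X$ is not computable from $r$ (this is just the standard argument that no atomless-measure random can be computed from its own oracle, using atomlessness of $\mu_r$ to choose $\ell_n$). However, there is a genuine gap at exactly the point you flag and then set aside. Under the Reimann--Slaman definition (Definition 3.2, which the paper quotes), ``$X$ is $r$-random'' means there \emph{exists some} representation $Y$ of $\mu_r$ such that $X$ is ML-random relative to $Y$. Noting that $r$ itself is \emph{a} representation does not close the gap: the hypothesis gives you randomness relative to some $Y$ that might a priori be Turing-incomparable with $r$, and an $r$-computable test refutes nothing about randomness relative to $Y$ unless you know $r\leq_T Y$. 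That inequality is precisely what must be proved, and it is the content of the key step of the paper's argument: Reimann--Slaman Proposition 2.3 shows every representation $Y$ computes approximations to $\mu_r(\sigma)$ for all $\sigma$, and applying this to $\sigma=1$ gives $\mu_r([1])=r$, so $Y\geq_T r$. Once you have that, your test argument goes through (since every $r$-test is then a $Y$-test).

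For comparison, the paper organizes the proof the other way around. Rather than pulling the randomness down to $r$ and building a test relative to $r$, it works directly with the witness representation $Y$: it invokes the standard fact (without re-deriving it) that $Y$ cannot compute a $\mu_r$-random relative to $Y$, establishes $Y\geq_T r$ via Proposition 2.3 as above, and concludes $r\not\geq_T X$ by transitivity. So the paper makes the ``every representation computes $r$'' step explicit and leaves the ``an oracle cannot compute its own random'' step as a known fact, whereas you do the opposite. Your version would be complete if you inserted the Reimann--Slaman Proposition 2.3 argument to justify that $X$ is indeed ML-random relative to $r$; as written, that step is acknowledged but missing, and it is not a matter of convention --- it is the mathematical heart of the proposition.
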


\begin{proof}
Reimann and Slaman \cite[Proposition 2.3]{ReimannSlaman} says that if $Y\subseteq\omega$ is a representation of some measure $\mu$ on $2^\omega$, then $Y$ computes a function $g_\mu:2^{<\omega}\times\omega\to\mathbb{Q}$ such that for all $\sigma\in2^{<\omega}$ and all $n\in\omega$, \[|g_\mu(\sigma,n)-\mu(\sigma)|\leq 2^{-n}\]
Take $\mu$ to be $\mu_r$, the $r$-Bernoulli measure. Fix $\sigma_1$ to be $1$, the binary string of length one with first bit $1$. Then we have $\mu_r(\sigma_1)=r$. Thus the $Y$-computable function $g(n)=g_{\mu_r}(\sigma_1,n)$ has the property that for all $n\in\omega$,
\[|g_{\mu_r}(\sigma_1,n)-\mu_r(\sigma_1)|=|g(n)-r|\leq 2^{-n}\]
In other words, $g$ can be used to compute $r$. As $Y$ was arbitrary, every representation of $\mu_r$ computes $r$.
\\
\\
By definition (\cite[Definition 3.2]{ReimannSlaman}), a set $X$ is $r$-random if and only if it is random with respect to some representation $Y$ of $\mu_r$. Therefore $Y$ cannot compute $X$. However, by the above argument, $Y$ computes $r$. Thus $r$ cannot compute $X$.
\end{proof}

\begin{theorem}
For almost all $r$, $r$ computes a set $A$ which has intrinsic density $r$ but cannot compute an $r$-random set.
\end{theorem}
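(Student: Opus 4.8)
The plan is to observe that Theorem \ref{every} already does essentially all of the work, provided we can supply the 1-random set $X$ from $r$ itself. The point is that the set $X_r$ produced there is computable from $r\oplus X$ and has intrinsic density exactly $r$, and the computation $P(X_r)=\sum_{n\in B_r}2^{-(n+1)}=r$ is insensitive to \emph{how} the bits of $r$ get used during the construction; so there is no harm in letting $X$ be computed from $r$. If $X\leq_T r$, then $X_r\leq_T r\oplus X\equiv_T r$, and we get a set computable from $r$ with intrinsic density $r$.

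First I would recall the standard measure-theoretic fact that almost every element of $2^\omega$ is Martin-L\"of random with respect to the fair-coin measure, and transport it along the (essentially bijective, measure-preserving) correspondence $r\leftrightarrow B_r$ between non-dyadic reals in $(0,1)$ and their binary expansions; this shows that the set of $r\in(0,1)$ such that $B_r$ is 1-random has Lebesgue measure $1$. Fix such an $r$ and set $X=B_r$. Then $X$ is a 1-random set and $X\equiv_T r$, in particular $X\leq_T r$.

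Next I would apply Theorem \ref{every} to this $r$ and this $X$, obtaining a set $A:=X_r$ with $P(A)=r$ and $A\leq_T r\oplus X$. Since $X\leq_T r$, we have $r\oplus X\equiv_T r$, hence $A\leq_T r$. I would emphasize here that nothing in Lemma \ref{randomsequence} or Theorem \ref{every} requires $X$ to be random \emph{relative to} $r$: the partition $\{A_i\}_{i\in\omega}$ and its intrinsic densities $\frac{1}{2^{i+1}}$ are built from $X$ alone, and the final identity for $P(X_r)$ uses only countable additivity of intrinsic density and the binary expansion of $r$. So reusing $r$ as both the ``randomness seed'' feeding the construction and the target density parameter is harmless.

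Finally, the non-computation claim is immediate from Proposition \ref{representation}: no $r$-random set is computable from $r$, and since $A\leq_T r$, anything computable from $A$ is computable from $r$; hence $A$ cannot compute any $r$-random set. Combining the pieces, for almost every $r$ the set $A=X_r$ is computable from $r$, has intrinsic density $r$, and computes no $r$-random set. I do not expect a genuine obstacle here — the substance of the paper lies in Theorems \ref{core} and \ref{every}, and this statement is essentially their corollary once one notices that a 1-random $r$ can drive its own construction; the only points needing a word of care are the measure-theoretic input (almost every real is 1-random, dyadic rationals being negligible) and the verification that the supplied $X$ is 1-random in exactly the sense Lemma \ref{randomsequence} requires.
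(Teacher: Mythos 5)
Your proposal is correct and follows essentially the same route as the paper: let $r$ be 1-random, apply Theorem \ref{every} with $r$ serving as both the density parameter and the random seed $X$, and then invoke Proposition \ref{representation} to conclude that the resulting $r$-computable set cannot compute any $r$-random set. The only cosmetic difference is that you make explicit the identification $X = B_r$ and spell out why letting $X$ depend on $r$ does not interfere with Lemma \ref{randomsequence} or Theorem \ref{every}, points the paper treats as understood.
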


\begin{proof}
Let $r$ be 1-random. Then we may apply Theorem \ref{every} with $r$ playing the role of both $r$ and $X$ to obtain an $r$-computable set $A$ which has intrinsic density $r$. Every $A$-computable set is $r$-computable because $A$ is $r$-computable, but by Proposition \ref{representation}, no $r$-random set can be $r$-computable. Therefore, $A$ cannot compute an $r$-random set.
\\
\\
As almost all reals are 1-random, this completes the proof.
\end{proof}

From each such set, we may use Theorem \ref{theoremwithin} to generate more examples.
 
\section{Closing Remarks and Future Work}

We showed a computational separation between the notions of intrinsic density and randomness by constructing examples of sets with intrinsic density $r$ which cannot compute $r$-random sets. There is room for future work in investigating whether this can be done for all $r$, or at least for all noncomputable $r$.
\\
\\
Additionally, there is room for applying these techniques to compare other notions of stochasticity with randomness. Many of the results proved for intrinsic density have analogs for MWC stochasticity, however it is currently open whether infinite unions work in that setting as they do for intrinsic density. Thus the analog of Theorem \ref{every} for MWC stochasticity remains open.


\begin{thebibliography}{}

\bibitem{Ambosspies}
Klaus Ambos-Spies.
Algorithmic randomness revisited.
\textit{Language, Logic and Formalization of Knowledge},
Coimbra Lecture and Proceedings of a Symposium held in Siena in.
1997.

\bibitem{intrinsicdensity}
Eric P. Astor.
Asymptotic density, immunity, and randomness.
\textit{Computability},
4(2):141-158, 2015.
ISSN 2211-3568.
doi: 10.3233/COM-150040.

\bibitem{downeyhirschfeldt}
Rodney G. Downey and Denis R. Hirschfeldt.
Algorithmic randomness and complexity.
\textit{Theory and Applications of Computability},
Springer,
2010.

\bibitem{JockuschSchupp}
Carl G. Jockusch, Jr. and Paul E. Schupp.
Generic computability, Turing degrees, and asymptotic density.
\textit{Journal of the London Mathematical Society},
85(2):472–490, 2012.
ISSN 0024-6107.
doi.org/10.1112/jlms/jdr051.

\bibitem{Nies}
Andr\'e Nies.
Computability and randomness.
\textit{Oxford Logic Guides},
Oxford University Press,
2009.

\bibitem{ReimannSlaman}
Jan Reimann and Theodore A. Slaman.
Measures and their random reals.
\textit{ArXiv},
arXiv:0802.2705, 2013.
v2

\bibitem{vanlambalgen}
M. van Lambalgen.
The axiomatization of randomness.
\textit{The Journal of Symbolic Logic},
55(3):1143-1167, 1990.
ISSN: 0022-4812
doi: 10.2307/2274480 

\bibitem{wang}
Yongge Wang.
Randomness and complexity.
PhD dissertation,
Ruprecht-Karls-Universit\"at Heidelberg,
August 1996.
https://webpages.uncc.edu/yonwang/papers/thesis.pdf

\end{thebibliography}
\end{document}